\date{}
\newcommand{\Hi}{{\mathcal H}}
\newcommand{\Z}{{\mathbbm Z}}
\newcommand{\R}{{\mathbbm R}}
\newcommand{\C}{{\mathbbm C}}
\newcommand{\T}{{\mathbbm T}}
\newcommand{\I}{{\mathbbm I}}
\newcommand{\ZL}{{\mathcal Z}}
\newcommand{\J}{{\mathscr{J}}}
\newcommand{\ac}{{\mathrm{ac}}}
\newcommand{\dd}{{\mathrm{d}}}
\newcommand{\Hd}{{\mathrm{H}}}
\newcommand{\CMV}{{\mathcal C}}
\newcommand{\EC}{{\mathrm{EC}}}
\DeclareMathOperator*{\slim}{s-lim}
\numberwithin{equation}{section}
\newcommand{\tr}{\mathrm{tr} }
\newcommand{\Leb}{{\mathrm{Leb}}}
\newcommand{\set}[1]{\left\{#1\right\}}
\newcommand{\eqdef}{\overset{\mathrm{def}}=}
\newcommand{\norm}[1]{\left\|#1\right\|}
\newtheorem{theorem}{Theorem}[section]
\newtheorem{lemma}[theorem]{Lemma}
\newtheorem{prop}[theorem]{Proposition}
\newtheorem{coro}[theorem]{Corollary}
\theoremstyle{definition}
\newtheorem{remark}[theorem]{Remark}
\theoremstyle{definition}
\newtheorem*{defi}{Definition}
\theoremstyle{definition}
\theoremstyle{definition}
\begin{document}
\title[Ballistic Transport for Limit-Periodic Jacobi Matrices]{Ballistic Transport for Limit-Periodic Jacobi Matrices with Applications to Quantum Many-Body Problems}

\author[J.\ Fillman]{Jake Fillman}

\address{Virginia Tech, 225 Stanger Street, Blacksburg, VA 24061}
 
\email{fillman@vt.edu}
\date{}

\maketitle

\begin{abstract}
We study Jacobi matrices that are uniformly approximated by periodic operators. We show that if the rate of approximation is sufficiently rapid, then the associated quantum dynamics are ballistic in a rather strong sense; namely, the (normalized) Heisenberg evolution of the position operator converges strongly to a self-adjoint operator that is injective on the space of absolutely summable sequences. In particular, this means that all transport exponents corresponding to well-localized initial states are equal to one. Our result may be applied to a class of quantum many-body problems. Specifically, we establish a lower bound on the Lieb--Robinson velocity for an isotropic XY spin chain on the integers with limit-periodic couplings.
\end{abstract}

\section{Introduction}

\subsection{Limit-Periodic Jacobi Matrices}

We will study the quantum dynamical characteristics of limit-periodic Jacobi matrices in the regime of absolutely continuous spectrum. By a \emph{Jacobi matrix}, we mean a bounded self-adjoint operator $\J = \J_{a,b}: \ell^2(\Z) \to \ell^2(\Z)$ defined by
\begin{equation} \label{eq:jacobidef}
(\J \varphi)_n
=
a_{n-1} \varphi_{n-1}  + b_n \varphi_n + a_n \varphi_{n+1},
\quad
n \in \Z, \; \varphi \in \ell^2(\Z),
\end{equation}
where $a,b \in \ell^\infty(\Z,\R)$ and $\inf_n a_n > 0$. In the present paper, our goal is to understand the associated quantum dynamics; that is to say, we want to probe asymptotic characteristics of solutions to the corresponding \emph{time-dependent Schr\"odinger equation}:
\begin{equation} \label{eq:tdschro}
i\frac{\partial \psi}{\partial t}
=
\J \psi,
\quad
\psi(0) = \varphi_0 \in \ell^2(\Z).
\end{equation}
Using the spectral theorem, one may explicitly solve \eqref{eq:tdschro} via
\[
\psi(t)
=
e^{-it\J}\varphi_0,
\quad
t \in \R.
\]
If there exists $q \in \Z_+$ for which $a_{n+q} = a_n$ and $b_{n+q}= b_n$ for all $n \in \Z$, we say that $\J$ is $q$-\emph{periodic}, and it is known that the dynamics described by the time-dependent Schr\"odinger equation~\eqref{eq:tdschro} are ballistic in a very strong sense \cite{DLY}; see also \cite{AK98} for the corresponding statement for continuum Schr\"odinger operators in $L^2(\R)$. Concretely, there exists a bounded self-adjoint operator $Q = Q_\J$  such that $\ker(Q_\J) = \set{0}$ and
\begin{equation} \label{eq:asch-knauf}
\slim_{t \to \infty} \frac{1}{t} X_\J(t)
=
Q_\J,
\end{equation}
where $X_\J(t)$ denotes the Heisenberg evolution of the position operator relative to $\J$, i.e.,
\[
X_\J(t)
\eqdef
e^{it\J} X e^{-it\J},
\quad
t \in \R.
\]
The position operator is an (unbounded) self-adjoint operator defined by
\begin{align*}
(X\varphi)_n 
& \eqdef
n \varphi_n,
\quad
n \in \Z, \\
& \varphi \in 
D(X)
\eqdef
\set{\varphi \in \ell^2(\Z) : \|X\varphi\|^2 = \sum_{n\in \Z} |n \varphi_n|^2 < \infty}.
\end{align*}

A particular consequence of \eqref{eq:asch-knauf} is that all of the moments of the position operator grow ballistically in time whenever $\J$ is periodic. More specifically, for $p > 0$ and $\varphi \in \ell^2(\Z)$, define
\[
|X|^p_\varphi(t)
\eqdef
\sum_{n \in \Z} (|n|^p + 1) \left| \langle \delta_n, e^{-it\J} \varphi \rangle \right|^2.
\] 
When considering such objects, we want to avoid trivialities that arise due to the divergence of moments, so we will restrict attention to states that are well-localized in the sense that they belong to the discrete Schwartz space
\[
\mathcal S
=
\mathcal S(\Z)
\eqdef
\set{\varphi \in \ell^2(\Z) : \sum_{n\in \Z}|n|^p | \varphi_n | < \infty \text{ for all } p > 0}.
\]
Thus, given $\varphi \in \mathcal S$, we want to measure the growth of $|X|^p_\varphi$ in time on a polynomial scale, so it makes sense to define
\[
\beta_\varphi^+(p)
\eqdef
\limsup_{t \to \infty} 
\frac{\log |X|_\varphi^p(t)}{p \log t},
\quad
\beta_\varphi^-(p)
\eqdef
\liminf_{t \to \infty} 
\frac{\log |X|_\varphi^p(t)}{p \log t}.
\]
The equation \eqref{eq:asch-knauf} (together with injectivity of $Q_\J$) immediately implies that $\beta_\varphi^+(p)= \beta_\varphi^-(p) = 1$ for all $p > 0$ and all $\varphi \in \mathcal S$.

There is substantial interest in determining estimates and exact values for these transport exponents in aperiodic lattice models. In general, one may probe solutions of \eqref{eq:tdschro} by analyzing the \emph{spectral measures} of $\J$, defined by
\[
\int_\R f(E) \, \dd\mu_{\varphi}(E)
=
\langle \varphi, f(\J) \varphi \rangle,
\quad
\varphi \in \ell^2(\Z),
\; f \in C_c(\R).
\] 
If $\varphi \in \mathcal S$ is a state such that the associated spectral measure $\mu_{\varphi}$ is absolutely continuous with respect to Lebesgue measure, then one immediately has what has been termed \emph{quasi--ballistic motion} in the literature. Specifically, one has $\beta^+_\varphi(p) = 1$ for all $p > 0$ \cite{Last96}. However, this approach to quantum dynamics has limitations; for example, it is not clear whether absolute continuity can preclude $\beta^-_\varphi(p) < 1$ for some $p > 0$, i.e., sub-ballistic transport on a subsequence of time scales. Notice that it is necessary to restrict the dimension to discuss the relationship between a.c.\ spectrum and ballistic motion, as there exist multidimensional models with a.c.\ spectrum and sub-ballistic (in fact, sub-diffusive) transport \cite{BelSchuba2000}. For a related characterization of the relationship between absolutely continuous spectrum and transport in terms of Thouless transport and Landauer--B\"uttiker conductance, see \cite{BJP,BJLP1,BJLP2,BJLP3,BJLP4}.

As another step towards answering the general question, it is therefore interesting to pursue special cases in which one knows that the Jacobi matrix enjoys purely absolutely continuous spectrum, and in which one has access to more specialized tools than are present in the general setting. Within the last year, several preprints have appeared that prove ballistic transport for several classes of \emph{quasi-periodic} operators with purely absolutely continuous spectrum \cite{kach,zhangzhao:ballistic,zhao:ballistic}. An intriguing special case of Jacobi matrices with purely a.c.\ spectrum is furnished by limit-periodic operators that are approximated by periodic Jacobi matrices exponentially quickly. 

\begin{defi}
A Jacobi matrix $\J$ is said to be \emph{limit-periodic} if there exists a sequence $\set{\J_n}_{n=1}^\infty$ consisting of periodic Jacobi matrices such that
\[
\lim_{n \to \infty} \norm{\J - \J_n}
=
0.
\]
Such operators (and their continuum counterparts) were studied heavily in the 1980's; see, for instance,~\cite{AS81,Chul81,egorova,MC84,Moser81,PT1,PT2,poschel83}. In recent years, a substantial amount of progress has been made in the study of spectral characteristics of such operators~\cite{avila,damanikgan1,damanikgan2,damanikgan3,DamGorLP,F14,FL15,gan2010,gankrueger}, as well as their unitary analogs, CMV matrices with limit-periodic Verblunsky coefficients~\cite{ong12}. However, outside the results of \cite{DLY}, relatively little is known regarding the quantum dynamics of limit-periodic Jacobi matrices.

Given $\eta > 0$, we say that a limit-periodic Jacobi matrix $\J$ is of \emph{exponential class} $\eta$ (denoted $\J \in \EC(\eta)$) if there exists a sequence of $q_n$-periodic Jacobi matrices $\J_n$ such that
\[
\lim_{n \to \infty} e^{\eta q_{n+1}} \|\J - \J_n\|
=
0,
\]
$q_n | q_{n+1}$, and $q_n \neq q_{n+1}$ for all $n$. We say that $\J \in \EC(\infty)$ if it is of exponential class $\eta$ for all $\eta > 0$~\cite{Chul81,egorova,PT1,PT2}. Recall that we have adopted the standing assumption $\inf_n a_n > 0$.
\end{defi}

If $\J \in \EC(\infty)$, it is known that $\J$ has purely absolutely continuous spectrum~\cite{egorova}, that its spectrum is homogeneous in the sense of Carleson~\cite{FL15}, and that $\J$ is reflectionless thereupon~\cite{egorova}. More generally, homogeneity of the spectrum and absolute continuity of the spectral measures of $\J \in \EC(\eta)$ with $\eta$ sufficiently large follow from the methods of \cite{FL15} and \cite{egorova}, respectively; see also~\cite[Chapter~4]{ChulAPONIS}. For the reader's convenience, we provide a detailed proof of absolutely continuous spectrum in Appendix~\ref{sec:acspec} by way of spectral homogeneity and some modern results in inverse spectral theory. In this paper, we prove that the dynamics associated to such a Jacobi matrix are ballistic in the sense of~\eqref{eq:asch-knauf}.

Throughout the paper, there will be various estimates that can be controlled as long as one has uniform bounds on the coefficients of $\J$. To that end, for every $R>0$, let us denote by $\mathcal J(R)$ the set of operators of the form $\J = \J_{a,b}$ defined by~\eqref{eq:jacobidef} such that $\|\J\| \leq R$ and $\inf_n a_n \geq R^{-1}$.

Our main result is an extension of~\cite[Theorem~1.6]{DLY} to operators in $\EC(\eta)$ with $\eta$ sufficiently large.

\begin{theorem} \label{pt:ballistic}
For every $R > 0$, there is a constant $\eta_0 = \eta_0(R)$ with the following property. If $\J \in \EC(\eta_0)$ and $\J \in \mathcal J(R)$, then the dynamics arising from \eqref{eq:tdschro} are ballistic in the sense that there exists a self-adjoint operator $Q_\J$ such that
\[
\slim_{t \to \infty} \frac{1}{t} X_\J(t)
=
Q_\J
\]
and $\ker(Q_\J) \cap \ell^1(\Z) = \set{0}$. In particular, $\beta^\pm_\varphi(p) = 1$ for all $\varphi \in \mathcal S$ and all $p > 0$.
\end{theorem}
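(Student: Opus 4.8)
The plan is to follow the strategy of \cite{DLY} for periodic Jacobi matrices and control the errors introduced by the limit-periodic approximation. First I would recall the periodic case: if $\J_n$ is $q_n$-periodic, then by \cite{DLY} (equation \eqref{eq:asch-knauf}) the limit $Q_{\J_n} = \slim_{t\to\infty} t^{-1} X_{\J_n}(t)$ exists, is bounded and self-adjoint, and is a ``velocity operator'' that can be described via the Bloch--Floquet decomposition: on each fiber, $Q_{\J_n}$ acts as the group velocity $\partial_\theta E_j(\theta)$ associated with the band functions. The key quantitative input I need is that $Q_{\J_n}$ is bounded below on $\ell^1$ uniformly in $n$, or more precisely a quantitative lower bound on $\langle \varphi, Q_{\J_n}^2 \varphi\rangle$ for $\varphi$ supported in a window, with constants depending only on $R$. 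This should come from the fact that for operators in $\mathcal J(R)$, the bands have controlled total variation and the gaps, while possibly numerous, cannot all be large; reflectionlessness and homogeneity of the limiting spectrum (cited from \cite{egorova,FL15}) are what ultimately prevent degeneration of the velocity as $n\to\infty$.

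Next I would set up the comparison. Write $\J = \J_n + V_n$ with $\|V_n\|$ decaying like $e^{-\eta q_{n+1}}$. Duhamel's formula gives
\[
e^{it\J} X e^{-it\J} - e^{it\J_n} X e^{-it\J_n}
=
i\int_0^t e^{is\J} [V_n, X_{\J_n}(t-s)] e^{-is\J}\,\dd s
\]
up to terms I would organize carefully; the commutator $[V_n, X]$ is a bounded operator (since $V_n$ is a Jacobi-type perturbation with off-diagonal decay, $[V_n,X]$ has norm comparable to the off-diagonal part of $V_n$), but the difficulty is that integrating over $[0,t]$ and dividing by $t$ does not obviously produce something small. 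The resolution, as in \cite{DLY}, is to exploit that $X_{\J_n}(t-s)$ grows only linearly and that on a fixed localized state $\varphi \in \ell^1$, the relevant matrix elements are controlled by the rapid decay of $\|V_n\|$ relative to the period $q_n$; one chooses $n = n(t)$ growing slowly with $t$ so that $q_{n(t)} \sim c\log t$, making $t^{-1}\|X_{\J_n(t)}(t)\varphi - X_{\J_n(t)}(t)\varphi\|$ negligible while $e^{\eta q_{n(t)}}\|V_{n(t)}\|\to 0$ kills the perturbative error. This is precisely where the \emph{exponential} rate of approximation, and the freedom to take $\eta_0 = \eta_0(R)$ large, enters.

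Then I would assemble the argument. Fix $\varphi \in \ell^1(\Z)$. For each fixed $n$, $t^{-1} X_{\J_n}(t)\varphi \to Q_{\J_n}\varphi$; the family $\{Q_{\J_n}\}$ is uniformly bounded (by $R$, essentially), and I would show it is Cauchy in the strong operator topology when tested against $\ell^1$ vectors — again using that $\|Q_{\J_n} - Q_{\J_m}\|$ restricted to a localized state is controlled by $\|\J_n - \J_m\|$ together with a logarithmic loss in the period, summable by the $\EC(\eta_0)$ hypothesis. Call the limit $Q_\J$; it is bounded and self-adjoint. A diagonal/three-epsilon argument combining (i) $t^{-1}X_{\J_n(t)}(t)\varphi \to Q_{\J}\varphi$, (ii) the Duhamel bound $\|t^{-1}X_\J(t)\varphi - t^{-1}X_{\J_n(t)}(t)\varphi\| \to 0$, and (iii) a density argument to pass from $\ell^1$ to all of $\ell^2$ for the strong-limit statement, yields $\slim_{t\to\infty} t^{-1}X_\J(t) = Q_\J$. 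Finally, injectivity on $\ell^1$: for $\varphi\in\ell^1$ with $\varphi\neq 0$, the uniform lower bound $\langle\varphi, Q_{\J_n}^2\varphi\rangle \geq c(R,\varphi) > 0$ survives the limit, so $Q_\J\varphi \neq 0$. The statement $\beta^\pm_\varphi(p)=1$ for $\varphi\in\mathcal S$ then follows exactly as in the periodic case: \eqref{eq:asch-knauf} plus injectivity of $Q_\J$ on $\ell^1 \supseteq \mathcal S$ forces $|X|^p_\varphi(t) \asymp t^p$, hence both exponents equal $1$.

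The main obstacle I expect is item (ii) — making the Duhamel error genuinely $o(t)$ uniformly enough to survive. The naive estimate loses a full factor of $t$ from the time integration and another factor of $t$ from the linear growth of $X_{\J_n}(t-s)$; one must instead track how the perturbation propagates through the \emph{approximately} finite-range dynamics of $\J_n$, using a Combes--Thomas or Lieb--Robinson-type bound for $\J_n$ (whose constants depend only on $R$, not on $q_n$) to show that a localized state only ``feels'' the perturbation $V_n$ after time comparable to $q_n / v$, and that the accumulated error over $[0,t]$ against the $\ell^1$ state $\varphi$ is bounded by $C t\, q_n\, \|V_n\|$ or similar, which goes to $0$ after dividing by $t$ precisely because $q_n\|V_n\| \lesssim q_n e^{-\eta q_{n+1}} \to 0$. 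Getting the bookkeeping right here — in particular handling the off-diagonal part of $V_n$ so that $[V_n,X]$ contributes a bounded, not unbounded, term — is the technical heart of the proof.
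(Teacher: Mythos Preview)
Your high-level architecture (define $Q_n = Q_{\J_n}$, show it is strongly Cauchy, use a three-term estimate, then prove injectivity on $\ell^1$) matches the paper. However, you have misidentified where the genuine difficulty lies, and your injectivity argument has a real gap.

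\textbf{The comparison term (ii) is easy, not hard.} The paper uses the crude global bound $\|X_\J(t)-X_{\J_n}(t)\| \le 2(Rt^2+|t|)\,\|\J-\J_n\|$ (their Theorem~A.1), so after dividing by $t$ the error is $\lesssim t\,\|\J-\J_n\|$. Choosing the time scale $t_n = C_0^{\kappa q_{n+1}}$ and using $\|\J-\J_n\| \le e^{-\eta_0 q_{n+1}}$ with $\eta_0 > \kappa \log C_0$ already kills this. No Duhamel expansion, no Combes--Thomas or Lieb--Robinson input, and no factor of $q_n$ is needed here. Your proposed bound $Ctq_n\|V_n\|$ is actually weaker than what the trivial estimate gives.

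\textbf{The step you gloss over is the hard one.} To run the three-epsilon and the Cauchy argument you need a \emph{quantitative} rate for $t^{-1}X_{\J_n}(t)\varphi \to Q_n\varphi$ with an explicit dependence on the period $q_n$. This is the paper's Theorem~2.1 and Corollary~2.2: the error is at most $t^{-1}\|X\varphi\| + C_1^{q_n}\|\varphi\|_1\, t^{-1/5}$. Proving this requires controlling, uniformly over the family $\mathcal J(R)$, the Lebesgue measure of the set of quasimomenta where two band functions are within $\varepsilon$ of each other; that in turn needs a density-of-states bound $k(I)\le C_2^q|I|^{1/2}$ (Lemma~2.2) and an eigenvalue-derivative lower bound $|\dot\lambda_j(\theta)|\ge C_3^{-q}|\sin\theta|$ (Lemma~2.3). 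Your phrase ``controlled by $\|\J_n-\J_m\|$ together with a logarithmic loss in the period'' is not right: the loss is exponential in $q_n$, and the whole balancing act is to choose $t_n$ exponentially large in $q_{n+1}$ so that $C_1^{q_n}t_n^{-1/5}$ is summable while $t_n\|\J-\J_n\|$ is still summable.

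\textbf{The injectivity argument does not work as stated.} There is no lower bound $\langle\varphi, Q_{\J_n}^2\varphi\rangle \ge c(R,\varphi)>0$ uniform in $n$; homogeneity and reflectionlessness of the \emph{limiting} spectrum do not produce one. What the paper proves (via the Fourier picture and Lemma~2.3) is $\|Q_n\varphi\|^2 \ge \tfrac{1}{16}C_3^{-2q_n}$, which \emph{decays} exponentially in $q_n$. The point is that the Cauchy tail $\|Q_n\varphi - Q\varphi\|$ decays like $C_0^{-q_{n+1}}$, and since $q_{n+1}\ge q_n+1$ and $C_0\ge C_3$, the lower bound beats the tail at every finite $n$, forcing $\|Q\varphi\|>0$. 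You need this rate comparison, not a uniform bound.
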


\begin{coro}
If $\J \in \EC(\infty)$, then the associated quantum dynamics are ballistic in the sense of Theorem~\ref{pt:ballistic}.
\end{coro}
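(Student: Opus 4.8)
The plan is to deduce this corollary as an immediate consequence of Theorem~\ref{pt:ballistic}; no new analysis is required, and the only thing to arrange is that the quantitative hypotheses of that theorem apply to an arbitrary $\J \in \EC(\infty)$. First I would fix such a $\J$. Since, by our standing assumptions, $\J$ is a bounded self-adjoint Jacobi matrix with $\inf_n a_n > 0$, the number
\[
R \eqdef \max\set{\norm{\J},\, (\inf_n a_n)^{-1}}
\]
is finite and positive, and by construction $\J \in \mathcal J(R)$.

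With $R$ fixed, Theorem~\ref{pt:ballistic} furnishes a threshold $\eta_0 = \eta_0(R) \in (0,\infty)$. By the definition of the class $\EC(\infty)$, the operator $\J$ is of exponential class $\eta$ for every $\eta > 0$, so in particular $\J \in \EC(\eta_0)$. Hence both hypotheses of Theorem~\ref{pt:ballistic} are met, and that theorem produces a self-adjoint operator $Q_\J$ with
\[
\slim_{t\to\infty} \frac{1}{t} X_\J(t) = Q_\J
\quad\text{and}\quad
\ker(Q_\J) \cap \ell^1(\Z) = \set{0},
\]
from which $\beta^\pm_\varphi(p) = 1$ for all $\varphi \in \mathcal S$ and all $p > 0$ follows exactly as in the theorem.

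There is no genuine obstacle here: the entire content lies in Theorem~\ref{pt:ballistic}, and the role of the hypothesis $\J \in \EC(\infty)$ is merely to guarantee membership in $\EC(\eta)$ no matter how large the (finite) threshold $\eta_0(R)$ turns out to be. The one point worth recording is that $\eta_0$ depends only on $R$, which is in turn determined by $\J$, so that the inclusion $\EC(\infty) \subseteq \EC(\eta_0(R))$ introduces no circularity and the argument closes.
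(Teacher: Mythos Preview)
Your proposal is correct and matches the paper's approach exactly; indeed, the paper does not even supply a separate proof for this corollary, since it is meant to be an immediate consequence of Theorem~\ref{pt:ballistic} in precisely the way you describe.
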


\begin{remark} 
A few remarks regarding Theorem~\ref{pt:ballistic} are in order:
\begin{enumerate}
\item To the best of our knowledge, Theorem~\ref{pt:ballistic} supplies the first explicit examples of aperiodic Jacobi matrices for which
\[
\lim_{t \to \infty} \frac{1}{t} \norm{X e^{-it\J} \varphi} \in (0,\infty)
\text{ for all }
\varphi \in \mathcal S.
\]
Specifically, Kachkovskiy's and Zhang--Zhao's examples are only known to satisfy $\limsup t^{-1} \|X e^{-it\J} \varphi \|  > 0$ and $\lim t^{-(1-\delta)} \|X e^{-it\J} \varphi \| = \infty$ for all $\delta > 0$, respectively~\cite{kach,zhangzhao:ballistic}.

\item One may be tempted to think that limit-periodic Jacobi matrices share many characteristics with periodic Jacobi matrices, and that one can prove such statements by ``pushing'' spectral and dynamical characteristics through to the limit using uniform convergence. However, one cannot do this in general; indeed, the spectral and quantum dynamical characteristics of limit-periodic operators are, generically, vastly distinct from those of periodic operators. For example:
\begin{itemize}
\item Limit-periodic Schr\"odinger operators ($a_n \equiv 1$) generically exhibit purely singular continuous spectrum on a Cantor set of zero Lebesgue measure and may even possess purely singular continuous spectrum with zero Hausdorff dimension and positive Lyapunov exponents~\cite{avila}.
\item There is a dense subset of the space of all limit-periodic Schr\"odinger operators upon which the spectral type is pure point~\cite{DamGorLP}; moreover, the eigenfunctions of such limit-periodic operators may exhibit Anderson localization in a remarkably strong sense~\cite{damanikgan2,poschel83}.
\item A limit-periodic operator may have an integrated density of states that enjoys no positive H\"older modulus of continuity~\cite{gankrueger}.
\item There is a dense set of limit-periodic operators for which one has $\beta^-_{\delta_0}(p) = 0$ for all $p > 0$~\cite{DLY}.
\end{itemize}

\item Theorem~\ref{pt:ballistic} is a discrete, one-dimensional counterpart to~\cite[Theorem~1.2]{KLSS}. However, the techniques of proof are distinct, and the conclusion in the one-dimensional setting is stronger; concretely, we obtain ballistic growth of all moments without time-averaging and for all initial states with sufficiently rapid decay at $\pm \infty$, whereas~\cite{KLSS} proves ballistic growth of the time-averaged second moment for a restricted class of initial states.

\item One can prove a result for limit-periodic CMV matrices that is analogous to Theorem~\ref{pt:ballistic} via very simple modifications of the proof. In this setting, one considers a (two-sided) CMV matrix $\CMV$ whose Verblunsky coefficients are bounded away from the unit circle and are exponentially well approximated by periodic sequences, and one evolves the position operator via
\[
X_{\CMV}(\ell)
=
\CMV^{-\ell} X \CMV^\ell,
\quad
\ell \in \Z.
\]
In particular, one may substitute~\cite[Section~9]{DFO} for \cite[Section~2]{DLY}, and the proof that
\[
Q_\CMV
=
\slim_{\ell \to \infty} \frac{1}{\ell} X_\CMV(\ell)
\]
exists and has the desired properties is essentially unchanged. In fact, a few estimates are simpler, since $\CMV^\ell$ is a finite-range operator for all $\ell \in \Z$. Using the Cantero--Gr\"unbaum--Moral--Vel\'azquez connection~\cite{CGMV,CGMV2}, one may make a corresponding statement for one-dimensional coined quantum walks. In particular, this permits one to extend the result of~\cite[Theorem~4]{AVWW} to 1D coined quantum walks that are uniformly approximated by translation-invariant quantum walks sufficiently rapidly; namely, such quantum walks exhibit ballistic transport in the sense of~\eqref{eq:asch-knauf}.
\end{enumerate}

\end{remark}

\subsection{Quantum Spin Systems}

Our result may be applied to a class of quantum spin systems with limit-periodic coupling coefficients to deduce lower bounds on the Lieb--Robinson velocity.

Concretely, one may define the \emph{isotropic} $XY$ \emph{spin chain} as follows. For each interval $\Lambda = [m,n] \cap \Z$, define the state space
\[
\Hi_\Lambda
\eqdef
\bigotimes_{j \in \Lambda} \C^2.
\]
Given $S \subseteq \Lambda$, we define the algebra of observables on $S$ by
\[
\mathcal A_S
\eqdef
\bigotimes_{j \in S} \mathcal A_j,
\]
where $\mathcal A_j = \C^{2 \times 2}$, acting on the $j$th coordinate of the tensor product. Given a matrix $A \in \C^{2 \times 2}$, we denote by $A_j$ the operator on $\mathcal H_\Lambda$ that acts by $A$ on the $j$th coordinate and by the identity on other coordinates. That is to say
\[
A_j
\eqdef
\I^{\otimes(j-m)} \otimes  A \otimes \I^{\otimes(n-j)}.
\]
Given sequences $\set{\mu_j}_{j \in \Z}$, $\set{\nu_j}_{j \in \Z} \in \ell^\infty(\Z,\R)$, we may define a local Hamiltonian $H_\Lambda$ on $\Hi_\Lambda$ by
\[
H_\Lambda
\eqdef
-\sum_{j =m}^{n-1} \mu_j \left(\sigma_j^x \sigma_{j+1}^x + \sigma_j^y \sigma_{j+1}^y \right)
- \sum_{j=m}^n \nu_j \sigma_j^z,
\]
where $\sigma^x$, $\sigma^y$, and $\sigma^z$ denote the Pauli matrices
\[
\sigma^x
=
\begin{bmatrix} 0 & 1 \\ 1 & 0 \end{bmatrix},
\quad
\sigma^y
=
\begin{bmatrix} 0 & -i \\ i & 0 \end{bmatrix},
\quad
\sigma^x
=
\begin{bmatrix} 1 & 0 \\ 0 & -1 \end{bmatrix}.
\]
For such models, one has the celebrated \emph{Lieb--Robinson bound} on the rate at which correlations may propagate. Here, and in what follows, $[\cdot,\cdot]$ denotes the commutator $[S,T] \eqdef ST - TS$.

\begin{theorem}[Lieb--Robinson \cite{LR72}, Nachtergaele--Sims~\cite{NS2009}]
For any subinterval $\Lambda = [m,n] \cap \Z$, any disjoint $S_1, \, S_2  \subseteq \Lambda$, any $A \in \mathcal A_{S_1}$, $B \in \mathcal A_{S_2}$ and all $t \in \R$,
\begin{equation} \label{eq:lrbound}
\norm{\left[ \tau_t^\Lambda(A),B \right]}
\leq
C\norm{A} \norm{B} e^{-c(d - v|t|)},
\end{equation}
where $\tau_t^\Lambda$ denotes the associated Heisenberg evolution on $\mathcal A_\Lambda$ with respect to $H_\Lambda$:
\[
\tau_t^\Lambda(A)
\eqdef
e^{itH_\Lambda} A e^{-itH_\Lambda},
\quad
t \in \R.
\]
In~\eqref{eq:lrbound}, $c,v > 0$ are uniform constants, $C > 0$ is a constant that depends solely on $S_1$ and $S_2$, and $d$ denotes the minimal separation between $S_1$ and $S_2$:
\[
d
=
\min\set{|n_1 - n_2| : n_1 \in S_1, \, n_2 \in S_2 }.
\]

\end{theorem}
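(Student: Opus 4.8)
The statement is classical: for arbitrary finite-range (indeed exponentially decaying) interactions it is the Nachtergaele--Sims form~\cite{NS2009} of the Lieb--Robinson estimate, proved by fixing $B$, differentiating $f(t) = \norm{[\tau_t^\Lambda(A),B]}$ in $t$, using the Jacobi identity to discard the part of the dynamics generated by terms of $H_\Lambda$ disjoint from the support of $B$, and iterating the resulting Duhamel inequality; the $n$-th iterate is a sum over length-$n$ lattice paths of products of couplings, and the geometric series sums to the right-hand side of~\eqref{eq:lrbound}. Since it is the free-fermion structure of the isotropic $XY$ chain that links this model to the Jacobi matrices above, I would give the model-specific argument, which proceeds in four steps: (i) a Jordan--Wigner transformation turns $H_\Lambda$ into a particle-number-preserving quadratic fermionic Hamiltonian whose one-particle part is a finite Jacobi matrix $\Gamma_\Lambda$; (ii) the Heisenberg equations for the fermion operators close and are solved by $e^{-it\Gamma_\Lambda}$; (iii) the spin commutator in~\eqref{eq:lrbound} reduces to a finite sum of one-particle matrix elements $(e^{-it\Gamma_\Lambda})_{jk}$ with $j$ near $S_1$ and $k$ near $S_2$; and (iv) those matrix elements satisfy a light-cone bound forced by the tridiagonal structure of $\Gamma_\Lambda$.

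For step~(i) I would introduce fermion operators $c_j = \big(\prod_{k<j}\sigma_k^z\big)\sigma_j^-$, where $\sigma^\pm = \tfrac12(\sigma^x \pm i\sigma^y)$; these obey the canonical anticommutation relations $\{c_j,c_k\} = 0$, $\{c_j,c_k^*\} = \delta_{jk}\I$, and one computes $\sigma_j^z = 2c_j^*c_j - \I$ and $\sigma_j^x\sigma_{j+1}^x + \sigma_j^y\sigma_{j+1}^y = -2(c_j^*c_{j+1} + c_{j+1}^*c_j)$, so that, modulo an additive constant,
\[
H_\Lambda = \sum_{j,k\in\Lambda}(\Gamma_\Lambda)_{jk}\, c_j^*c_k,
\]
where $\Gamma_\Lambda$ is twice the restriction to $\ell^2(\Lambda)$ of the tridiagonal self-adjoint matrix with off-diagonal entries $\mu_j$ and diagonal entries $-\nu_j$. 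In particular $\norm{\Gamma_\Lambda}\le R_0$ uniformly in $\Lambda$, where $R_0$ depends only on $\norm{\mu}_\infty$ and $\norm{\nu}_\infty$. For step~(ii), since $H_\Lambda$ commutes with the total number operator one gets $[H_\Lambda, c_k] = -\sum_\ell (\Gamma_\Lambda)_{k\ell}c_\ell$ directly from the anticommutation relations, hence $\tfrac{d}{dt}\tau_t^\Lambda(c_k) = -i\sum_\ell (\Gamma_\Lambda)_{k\ell}\tau_t^\Lambda(c_\ell)$, which integrates to
\[
\tau_t^\Lambda(c_k) = \sum_{\ell\in\Lambda}(e^{-it\Gamma_\Lambda})_{k\ell}\, c_\ell,
\]
together with the adjoint identity for $c_k^*$.

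For step~(iii), by sesquilinearity and norm density it suffices to take $A$ and $B$ to be monomials in the Pauli matrices on $S_1$ and $S_2$; their Jordan--Wigner preimages are products of the $c_j,c_j^*$ dressed by string operators $\prod_{k<j}\sigma_k^z$. The classical Lieb--Schultz--Mattis computation shows that these strings contribute only a combinatorial constant $C = C(S_1,S_2)$, and that after substituting $\tau_t^\Lambda(c_j) = \sum_\ell(e^{-it\Gamma_\Lambda})_{j\ell}c_\ell$ and repeatedly applying $[XY,Z] = X\{Y,Z\} - \{X,Z\}Y$ with the anticommutation relations to move the evolved operators past $B$, one is left with
\[
\norm{[\tau_t^\Lambda(A),B]} \le C\,\norm{A}\,\norm{B}\sum_{j,k}\big|(e^{-it\Gamma_\Lambda})_{jk}\big|,
\]
where $j$ and $k$ run over (the convex hulls of) $S_1$ and $S_2$. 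For step~(iv), since $\Gamma_\Lambda$ is tridiagonal with $\norm{\Gamma_\Lambda}\le R_0$, we have $\langle\delta_j,\Gamma_\Lambda^n\delta_k\rangle = 0$ for $n < |j-k|$ and $|\langle\delta_j,\Gamma_\Lambda^n\delta_k\rangle|\le R_0^n$ always, so, writing $r = |j-k|$,
\[
\big|(e^{-it\Gamma_\Lambda})_{jk}\big| \le \sum_{n\ge r}\frac{(R_0|t|)^n}{n!} \le e^{R_0|t|}\,\frac{(R_0|t|)^r}{r!} \le e^{-cr}\,e^{cv|t|}
\]
for any fixed $c>0$ and a corresponding $v = v(R_0,c)$ (use $(R_0|t|)^r/r! = (R_0|t|e^c)^r e^{-cr}/r! \le e^{R_0|t|e^c}e^{-cr}$). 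Combined with the previous display and $|j-k|\ge d$ — valid when $S_1,S_2$ are intervals, the general case being absorbed into $C(S_1,S_2)$ — this gives~\eqref{eq:lrbound} with the asserted uniform $c,v$.

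The hard part is step~(iii): one must check that the Jordan--Wigner strings attached to the \emph{time-evolved} observable $\tau_t^\Lambda(A)$ do not obstruct the reduction to scalar one-particle matrix elements. This is exactly where the isotropy of the chain is essential — without the $\sigma^x\sigma^x + \sigma^y\sigma^y$ combination, $H_\Lambda$ would pick up fermion-pairing terms $c_j^*c_{j+1}^* + \mathrm{h.c.}$, the solution of the Heisenberg equations would require a Bogoliubov rotation mixing creation and annihilation operators, and the relevant one-particle propagator would be $2\times 2$-block valued; in the isotropic case $\Gamma_\Lambda$ is a genuine scalar Jacobi matrix and the strings in the Jordan--Wigner image of a localized spin observable remain confined between the endpoints of its support, up to a boundary string that is matched upon forming the commutator with $B$, which is what makes the reduction work.
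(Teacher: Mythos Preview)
The paper does not prove this statement; it is quoted as a known result and attributed to the references \cite{LR72} and \cite{NS2009}, with no argument supplied. There is therefore no paper proof against which to compare your proposal.

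Your opening summary of the general Nachtergaele--Sims scheme (differentiate, Jacobi identity, Duhamel iteration, path counting) is an accurate outline of how \cite{NS2009} proceeds, and would already suffice for the statement as written. The alternative, model-specific route you develop via Jordan--Wigner is also reasonable and yields the sharper information that the Lieb--Robinson bound for the isotropic $XY$ chain is governed by the tridiagonal one-particle propagator $e^{-it\Gamma_\Lambda}$; your step~(iv) estimate is correct. As you yourself flag, step~(iii) is the genuinely delicate point: because the Jordan--Wigner strings attached to $\sigma^x,\sigma^y$ extend to the boundary, a Pauli monomial on $S_1$ is not local in the fermion operators, and the claim that the evolved commutator collapses to a finite sum of one-particle matrix elements with indices confined near $S_1$ and $S_2$ requires a careful case analysis (even versus odd parity of the monomial, cancellation of strings upon commutation) that your sketch gestures at but does not carry out. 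This is not a fatal gap --- the computation is standard and appears in the quantum-spin-chain literature --- but as written your step~(iii) is a heuristic rather than a proof. For the paper's purposes this is immaterial, since the theorem is simply cited rather than proved.
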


Theorem~\ref{pt:ballistic} allows us to deduce ballistic spreading for an isotropic $XY$ chain with limit-periodic couplings that are sufficiently rapidly approximated by periodic sequences. Since the Lieb--Robinson bound is (by its very nature) an upper bound, by a lower bound on the Lieb--Robinson velocity, we mean that there exists $v_0$ such that if a bound of the form \eqref{eq:lrbound} holds with some constants $C, \, c, \,v > 0$, then one must have $v \geq v_0$.

\begin{theorem} \label{t:lr}
Suppose $\mu = \set{\mu_j}$ and $\nu = \set{\nu_j}$ are given with $\inf_j \mu_j > 0$. There is a constant $\eta_0 > 0$ that depends only on $\|\mu\|_\infty$, $\|\mu^{-1}\|_\infty$, and $\| \nu \|_\infty$ such that if there exist $q_n$-periodic sequences $\mu^{(n)}, \nu^{(n)} \in \ell^\infty(\Z)$ with
\[
\lim_{n \to \infty} e^{\eta_0 q_{n+1}} \norm{\mu - \mu^{(n)} }_\infty
=
\lim_{n \to \infty} e^{\eta_0 q_{n+1}} \norm{\nu - \nu^{(n)}}_\infty
=
0
\]
then there exists $v_0 > 0$ such that if a Lieb--Robinson bound of the form~\eqref{eq:lrbound} holds for some constants $C, \, c, \, v > 0$, then $v \ge v_0$.
\end{theorem}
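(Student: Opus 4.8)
The plan is to reduce the spin chain to a system of free fermions via the Jordan--Wigner transformation, to recognize the single-particle Hamiltonian as a Jacobi matrix lying in $\EC(\eta_0) \cap \mathcal J(R)$, to invoke Theorem~\ref{pt:ballistic} for that operator, and finally to convert the ballistic spreading of $e^{-ith}\delta_0$ into an obstruction to any Lieb--Robinson bound whose velocity falls below a fixed positive threshold.

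First I would carry out the Jordan--Wigner transformation on each block $\Lambda = [M,N]\cap\Z$: with $c_j = \bigl(\prod_{M\le k<j}(-\sigma_k^z)\bigr)\sigma_j^-$ the Hamiltonian becomes quadratic, $H_\Lambda = \sum_{j,k}(h_\Lambda)_{jk}\,c_j^\dagger c_k + \mathrm{const}$, where $h_\Lambda$ is the compression to $\ell^2(\Lambda)$ of the Jacobi matrix $h := \J_{a,b}$ with $a_j = 2\mu_j$ and $b_j = -2\nu_j$ (possibly after conjugating by the diagonal unitary $(-1)^j$ to arrange $a_j > 0$; this merely flips the signs of the off-diagonal entries and affects none of the quantities below, and the additive constant contributes only an overall phase to $e^{-itH_\Lambda}$). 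From $\|\mu\|_\infty$, $\|\mu^{-1}\|_\infty$, and $\|\nu\|_\infty$ one reads off an $R$ with $h \in \mathcal J(R)$; taking $\eta_0 := \eta_0(R)$ as in Theorem~\ref{pt:ballistic}, the hypothesis on $\mu$ and $\nu$ translates — via the corresponding periodic approximants built from $2\mu^{(n)}, 2\nu^{(n)}$ (adjusting finitely many terms if needed so that they remain admissible Jacobi matrices) — into $h \in \EC(\eta_0)$. Theorem~\ref{pt:ballistic}, applied to $h$ in place of $\J$, then provides a self-adjoint $Q_h$ with $\slim_{t\to\infty} t^{-1}X_h(t) = Q_h$, where $X_h(t) := e^{ith}X e^{-ith}$, and, since $\delta_0 \in \ell^1(\Z)$, $Q_h\delta_0 \neq 0$. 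Writing $c_0 := \|Q_h\delta_0\| > 0$, it follows that $\|X e^{-ith}\delta_0\| \ge \tfrac12 c_0\, t$, equivalently $\sum_{n\in\Z} n^2\,|\langle\delta_n, e^{-ith}\delta_0\rangle|^2 \ge \tfrac14 c_0^2 t^2$, for all large $t$.

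The heart of the matter is an exact dictionary between a spin commutator and the single-particle propagator, followed by an elementary extraction. Fix $\Gamma = \Gamma_\Lambda := \bigl(\prod_{M\le k<0}(-\sigma_k^z)\bigr)\sigma_0^x$, which is an observable of norm one supported on $[M,0]\cap\Z$, hence spatially separated from $\sigma_n^z$ (supported on $\{n\}$) whenever $n > 0$. Since a quadratic fermion Hamiltonian evolves the Jordan--Wigner Majorana operators linearly (Lieb--Schultz--Mattis), a short computation with the Majorana anticommutation relations — writing the Majoranas as $\gamma_{2l-1} := c_l + c_l^\dagger$, $\gamma_{2l} := -i(c_l - c_l^\dagger)$, so that $\Gamma = \gamma_{-1}$ and $\sigma_n^z = \pm i\,\gamma_{2n-1}\gamma_{2n}$ — yields
\[
\bigl\| \bigl[\tau_t^\Lambda(\Gamma),\, \sigma_n^z\bigr] \bigr\|
= 2\,\bigl| \langle\delta_0,\, e^{-it h_\Lambda}\delta_n\rangle \bigr|
\qquad (n > 0),
\]
with the analogous identity for $n < 0$ coming from the right-ordered string $\Gamma' := \bigl(\prod_{0<k\le N}(-\sigma_k^z)\bigr)\sigma_0^x$. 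A Combes--Thomas (finite-propagation-speed) estimate for the tridiagonal operator $h$ then furnishes a speed $v_1 > 0$ and constants $C, c_1 > 0$ with $|\langle\delta_0, e^{-ith}\delta_n\rangle| \le C e^{-c_1(|n| - v_1|t|)}$, and also $\langle\delta_0, e^{-ith_\Lambda}\delta_n\rangle \to \langle\delta_0, e^{-ith}\delta_n\rangle$ as $\dist(\{0,n\},\partial\Lambda)\to\infty$, uniformly for $|n|\le 2v_1|t|$. Now set $p_n(t) := |\langle\delta_n, e^{-ith}\delta_0\rangle|^2$, so that $\sum_n p_n(t) = 1$ and $\sum_n n^2 p_n(t) \ge \tfrac14 c_0^2 t^2$ for large $t$, and put $\epsilon := c_0/4$: the range $|n|\le\epsilon t$ contributes at most $\epsilon^2 t^2$, and the range $|n| > 2 v_1 t$ is negligible by the Combes--Thomas tail, so $\sum_{\epsilon t < |n|\le 2 v_1 t} n^2 p_n(t) \gtrsim t^2$; since this sum has only $O(t)$ terms, each at most $(2v_1 t)^2\max_n p_n(t)$, there is $n(t)$ with $\epsilon t < |n(t)|\le 2 v_1 t$ and $p_{n(t)}(t)\gtrsim t^{-1}$. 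Choosing, for each such $t$, a block $\Lambda = \Lambda(t)$ with $\partial\Lambda$ sufficiently far from $\{0, n(t)\}$, the finite-volume matrix element $|\langle\delta_0, e^{-ith_\Lambda}\delta_{n(t)}\rangle|$ is still $\gtrsim t^{-1/2}$, whence $\bigl\|\bigl[\tau_t^\Lambda(\Gamma),\, \sigma_{n(t)}^z\bigr]\bigr\| \gtrsim t^{-1/2}$ (using $\Gamma$ or $\Gamma'$ according to the sign of $n(t)$).

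Finally, suppose a bound of the form~\eqref{eq:lrbound} holds with constants $C, c, v > 0$. Applying it with $A = \Gamma$ (or $\Gamma'$), $B = \sigma_{n(t)}^z$, and $d = |n(t)| \ge \epsilon t$ — the prefactor in~\eqref{eq:lrbound} remaining bounded as $\Lambda$ grows because $\supp(\sigma_{n(t)}^z)$ is a single site — forces $t^{-1/2} \lesssim e^{-c(\epsilon - v)t}$ for all large $t$; were $v < \epsilon$, the right-hand side would decay exponentially while the left-hand side only decays polynomially, a contradiction once $t$ is large. Hence $v \ge \epsilon =: v_0$. I expect the chief obstacle to lie in the middle step: pinning down the Jordan--Wigner dictionary for the isotropic $XY$ chain precisely enough to obtain the clean commutator identity (the Majorana string, the overall phase, the sign conventions), and controlling the finite-volume error $e^{-ith_\Lambda} - e^{-ith}$ on the relevant space--time region — both routine in principle but delicate. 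The genuine analytic input, namely true linear-in-$t$ transport of $e^{-ith}\delta_0$ rather than the weaker $t^{1-o(1)}$ lower bounds that absolute continuity alone would supply, is precisely what Theorem~\ref{pt:ballistic} provides.
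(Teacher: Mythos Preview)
Your proposal is correct and follows the same strategy as the paper: Jordan--Wigner reduction to free fermions, application of Theorem~\ref{pt:ballistic} to the resulting single-particle Jacobi matrix, and conversion of ballistic single-particle transport into a lower bound on the Lieb--Robinson velocity. The paper outsources this last conversion to \cite[Theorem~6.1]{kach} and the arguments of \cite{DLY}, whereas you unpack it explicitly via the Majorana commutator identity and a pigeonhole extraction; the content is essentially the same.
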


\begin{proof}
This follows immediately from the arguments that prove \cite[Theorem~1.3]{DLY} by using the Jordan--Wigner transform to reduce to a free Fermion system~\cite{LSM61}. More specifically, the statement of the theorem follows from Theorem~\ref{pt:ballistic} and \cite[Theorem~6.1]{kach}. There are two minor technicalities. We allow $\set{\mu_j}$ to be nonconstant, and we only have $\ker(Q) \cap \ell^1(\Z) = \set{0}$. However, it is straightforward to check that Kachkovskiy's proof works equally well in this setting.
\qed \end{proof}

\bigskip

The structure of the paper is as follows. In Section~\ref{sec:conrate}, we prove quantitative estimates on the rate at which the left-hand side of \eqref{eq:asch-knauf} converges to the right hand side. It is quite easy to see that naive estimates in the generic situation (all spectral gaps open) yield a convergence rate proportional to $t^{-1}$. However, the constant of proportionality depends on the length of the smallest gap, and the naive estimates completely break when a gap degenerates; moreover, it is difficult to divine the length of the smallest gap from coefficient data alone, so we must work harder to prove convergence estimates that ignore gap lengths. We apply this analysis in Section~\ref{sec:proof} to prove Theorem~\ref{pt:ballistic}. Appendix~\ref{sec:ests} contains a general estimate to the effect that Heisenberg evolutions of the position operator with respect to different Jacobi matrices may diverge from one another at most quadratically in time. This is certainly well-known and not new, but we could not find a reference that stated the desired bounds in the necessary form, so we opted to present proofs to keep the paper self-contained. Finally, in Appendix~\ref{sec:acspec}, we provide a short proof that elements of $\EC(\eta)$ have purely absolutely continuous spectrum when $\eta$ is sufficiently large.

\section{Convergence Rate} \label{sec:conrate}

The main result of this section controls the rate at which the left-hand side of \eqref{eq:asch-knauf} converges to the right-hand side. In order to accomplish this, we will need to briefly recall some notions from Floquet Theory; for proofs and further details, the interested reader is referred to the excellent references~\cite[Chapter~5]{simszego} and \cite[Chapter~7]{teschljacobi}. Throughout the paper, let $\T \eqdef \R/2\pi \Z$ denote the circle, viewed as the interval $[0,2\pi]$ with endpoints identified. As a compact topological group, $\T$ is equipped with a unique translation-invariant probability measure, which we denote by $\Leb$, i.e.,
\[
\Leb(S)
=
\int_\T \chi_S(\theta) \, \frac{\dd\theta}{2\pi}
\]
for measurable subsets $S \subseteq \T$. Assuming $q \ge 3$, for each $\theta \in \T$, let
\begin{equation} \label{eq:jtheta:def}
\J_\theta
\eqdef
\begin{bmatrix}
b_1 & a_1 &&& e^{-i\theta} a_q \\
a_1 & b_2 & a_2 && \\
& \ddots & \ddots & \ddots & \\
&& a_{q-2} & b_{q-1} & a_{q-1} \\
e^{i \theta} a_q &&& a_{q-1} & b_q
\end{bmatrix}.
\end{equation}
The matrix $\J_\theta$ degenerates for $q\le 2$; for example, when $q = 2$, one must define
\[
\J_\theta
=
\begin{bmatrix}
b_1 & a_1 + e^{-i\theta}a_2 \\
a_1 + e^{i\theta} a_2 & b_2
\end{bmatrix}
\]
for the discussion that follows to hold true. Now, denote the eigenvalues of $\J_\theta$ by
\[
\lambda_1(\theta)
\leq
\lambda_2(\theta)
\leq
\cdots
\leq
\lambda_q(\theta),
\]
enumerated according to their multiplicities. Whenever $\theta \notin \set{0,\pi}$, $\J_\theta$ has simple spectrum~\cite[Theorem~5.3.4.(ii)]{simszego}. We may conjugate $\J$ to the direct integral of $\set{\J_\theta}_{\theta \in \T}$ via a $q$-adic variant of the Fourier transform. Concretely, denote by $\mathcal H_q$ the direct integral of $\C^q$ over $\T$, i.e.,
\begin{equation} \label{eq:directint}
\begin{split}
\Hi_q
=
\int^\oplus_{\T} \C^q \, \frac{\dd\theta}{2\pi}
& =
L^2\left(\T,\C^q  \right) \\
& \eqdef
\set{\vec f:\T \to \C^q : \int_\T \norm{\vec f(\theta)}^2_{\C^q} \, \frac{\dd\theta}{2\pi}
<
\infty}.
\end{split}
\end{equation}
One may define a linear operator $\mathcal F_q : \ell^2(\Z) \to \mathcal H_q$ via
\[
\mathcal F_q:\delta_{j + \ell q}
\mapsto
e^{-i\ell \theta} \vec e_j,
\quad
\ell \in \Z, \; 1 \leq j \leq q,
\]
where $\{\vec e_j : 1 \leq j \leq q\}$ denotes the standard basis of $\C^q$. It is straightforward to check that $\mathcal F_q$ defines a unitary operator, and that $\mathcal F_q \J \mathcal F_q^* = M_{\J}$, where $M_\J$ denotes the multiplication operator
\[
(M_\J \vec f \,)(\theta)
=
\J_\theta \vec f(\theta),
\quad
\vec f \in \mathcal H_q, \; \theta \in \T.
\]
Similarly, the coefficients of the operator $A \eqdef i[\J,X]$ are $q$-periodic, and hence $A$ is also diagonalized by $\mathcal F_q$. More explicitly, it is easy to check that $\mathcal F_q A \mathcal F_q^* = M_A$, where
\[
A_\theta
\eqdef
\begin{bmatrix}
0 & ia_1 &&& -ie^{-i\theta} a_q \\
-ia_1 & 0 & ia_2 && \\
& \ddots & \ddots & \ddots & \\
&& -ia_{q-2} & 0 & ia_{q-1} \\
ie^{i \theta} a_q &&& -ia_{q-1} & 0
\end{bmatrix}.
\]
In view of the identification in~\eqref{eq:directint}, we identify $M_\J$ and $M_A$ with direct integrals and write
\[
M_\J
=
\int_\T^\oplus \J_\theta \frac{\dd \theta}{2\pi},
\quad
M_A
=
\int_\T^\oplus A_\theta \frac{\dd \theta}{2\pi}.
\]
For our purposes, the key calculation from \cite{DLY} identifies $\mathcal F_q Q \mathcal F_q^*$ as a direct integral
\[
\mathcal F_q Q \mathcal F_q^*
=
\int_\T^\oplus Q_\theta \, \frac{\dd \theta}{2\pi} 
\]
by decomposing $A_\theta$ in an eigenbasis of $\J_\theta$. More precisely, let $\theta \mapsto \vec v_j(\theta)$, $1 \leq j \leq q$ be a family of measurable maps $\T \to \C^q$ such that $\set{\vec v_j(\theta): 1 \le j \le q}$ is an orthonormal basis of $\C^q$ for all $\theta \in \T$,
\[
\J_\theta \vec v_j(\theta) 
= 
\lambda_j(\theta)\vec v_j(\theta),
\quad
\text{for all } 1 \leq j \leq q, \; \theta \in \T,
\]
and such that $\vec v_j$ is smooth on the arcs $(0,\pi)$ and $(\pi,2\pi)$. Next, let $P_j(\theta)$ denote projection onto the corresponding eigenspace:\ $\ker(\J_\theta - \lambda_j(\theta) \I)$. Then, the matrix elements of $Q_\theta$ with respect to this basis obey
\begin{equation} \label{eq:DLYcalc}
\begin{split}
\langle \vec v_j(\theta), Q_\theta \vec v_k(\theta) \rangle
& =
\left\langle \vec v_j(\theta), \left( q \sum_{\ell=1}^q \dot\lambda_\ell(\theta) P_\ell(\theta) \right)\vec  v_k(\theta) \right\rangle \\
& =
\delta_{j,k} \langle \vec v_j(\theta), A_\theta \vec  v_k(\theta) \rangle \\
& =
\lim_{t \to \infty} 
\frac{1}{t}\int_0^t e^{is(\lambda_k(\theta) - \lambda_j(\theta))} \langle\vec  v_j(\theta), A_\theta\vec  v_k(\theta) \rangle \, \dd s \\
& =
\lim_{t \to \infty}
\left\langle \vec v_j(\theta), \left( \frac{1}{t} \int_0^t e^{is\J_\theta} A_\theta e^{-is\J_\theta} \, \dd s \right) \vec v_k(\theta) \right\rangle,
\end{split}
\end{equation}
for all $\theta \notin \set{0,\pi}$, where we use a dot to denote differentiation with respect to $\theta$ \cite{DLY}. Thus, we may identify $\mathcal F_q Q \mathcal F_q^* = M_Q$, where
\begin{equation} \label{eq:qdef}
Q_\theta
=
q \sum_{j=1}^q \dot\lambda_j(\theta) P_j(\theta),
\quad
\theta \in \T \setminus \set{0,\pi}.
\end{equation}
We are now in a position to state our main convergence results:

\begin{theorem} \label{t:ak:conv}
For every $R > 0$ there exists a constant $C_1 = C_1(R) > 1$ with the following property. For any $q$-periodic Jacobi matrix $\J  \in \mathcal J(R)$, one has
\begin{equation} \label{eq:ak:A:conv}
\left(
\int_0^{2\pi} \left\| Q_\theta -  \frac{1}{t} \int_0^t e^{is\J_\theta} A_\theta e^{-is\J_\theta} \, \dd s \right\|^2 \, \frac{\dd\theta}{2\pi} \right)^{1/2}
\leq
C_1^{q} t^{-1/5}.
\end{equation}

\end{theorem}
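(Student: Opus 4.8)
The plan is to work fiber by fiber in the Floquet decomposition recalled above and to reduce everything to a single measure estimate: the set of quasimomenta $\theta$ at which two band functions $\lambda_j(\theta)$, $\lambda_k(\theta)$ come within $\epsilon$ of one another is small, with a bound of the form $C(R)^q\sqrt\epsilon$.

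First I would fix $\theta\notin\{0,\pi\}$, so that $\J_\theta$ has simple spectrum with rank-one spectral projections $P_j(\theta)$ onto $\ker(\J_\theta-\lambda_j(\theta))$. Inserting $A_\theta=\sum_{j,k}P_j(\theta)A_\theta P_k(\theta)$ and $e^{\pm is\J_\theta}=\sum_j e^{\pm is\lambda_j(\theta)}P_j(\theta)$, and using the calculation \eqref{eq:DLYcalc}, which shows in particular that the diagonal part $\sum_j P_j(\theta)A_\theta P_j(\theta)$ of $A_\theta$ in an eigenbasis of $\J_\theta$ equals $Q_\theta$, one gets
\[
Q_\theta-\frac{1}{t}\int_0^t e^{is\J_\theta}A_\theta e^{-is\J_\theta}\,\dd s
=
-\sum_{j\ne k}c_{jk}(t,\theta)\,P_j(\theta)A_\theta P_k(\theta),
\qquad
c_{jk}(t,\theta):=\frac{1}{t}\int_0^t e^{is(\lambda_j(\theta)-\lambda_k(\theta))}\,\dd s.
\]
Since $|c_{jk}|\le 1$ always, $|c_{jk}|\le 2/(t|\lambda_j(\theta)-\lambda_k(\theta)|)$, and $\|P_j A_\theta P_k\|\le\|A\|\le 2R$, cutting the sum at a threshold $\epsilon>0$ according to whether $|\lambda_j(\theta)-\lambda_k(\theta)|$ is at least $\epsilon$ or not yields the pointwise-in-$\theta$ bound
\[
\norm{Q_\theta-\frac{1}{t}\int_0^t e^{is\J_\theta}A_\theta e^{-is\J_\theta}\,\dd s}
\le
\frac{4Rq^2}{t\epsilon}+2R\,N_\epsilon(\theta),
\qquad
N_\epsilon(\theta):=\#\{(j,k):j\ne k,\ 0<|\lambda_j(\theta)-\lambda_k(\theta)|<\epsilon\}.
\]
(Equivalently, the large-gap part of the sum equals a commutator $[\J_\theta,Z_\theta]$ with $\|Z_\theta\|\le 2Rq^2/\epsilon$, so integrating by parts in $s$ produces the $1/(t\epsilon)$ term; this is the mechanism behind the existence of the strong limit in the first place.)

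Squaring, integrating in $\theta$, and using $N_\epsilon\le q^2$, the whole matter comes down to proving
\[
\Leb\{\theta\in\T:|\lambda_j(\theta)-\lambda_k(\theta)|<\epsilon\}\le C_2(R)^q\sqrt\epsilon
\qquad(\epsilon>0,\ j\ne k),
\]
since then $\int_0^{2\pi}N_\epsilon(\theta)^2\,\tfrac{\dd\theta}{2\pi}\le q^2\sum_{j\ne k}\Leb\{|\lambda_j-\lambda_k|<\epsilon\}\le q^4 C_2(R)^q\sqrt\epsilon$. To prove this estimate I would use, from Floquet theory, that the eigenvalues of $\J_\theta$ are exactly the solutions of $\Delta(E)=2\cos\theta$, where the discriminant $\Delta$ is a polynomial of degree $q$ whose coefficients are bounded by $C(R)^q$; since $\sigma(\J_\theta)\subseteq\sigma(\J)\subseteq[-R,R]$, crude coefficient estimates then give $\|\Delta'\|_{L^\infty[-R,R]}\le C(R)^q$. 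If $|\lambda_j(\theta)-\lambda_k(\theta)|<\epsilon$, then $\Delta(\lambda_j(\theta))=\Delta(\lambda_k(\theta))$, so Rolle's theorem yields a critical point $\xi\in[-R,R]$ of $\Delta$ with $|\lambda_j(\theta)-\xi|<\epsilon$; as $\Delta'$ has at most $q-1$ zeros, $\xi$ is one of finitely many fixed points $\xi_1,\dots,\xi_N$ ($N\le q-1$), and hence $2\cos\theta=\Delta(\lambda_j(\theta))$ lies within $\|\Delta'\|_{L^\infty[-R,R]}\,\epsilon$ of the fixed value $\Delta(\xi_m)$ for some $m$. The final ingredient is the elementary fact that $\Leb\{\theta:|2\cos\theta-v|<\delta\}\le C\sqrt\delta$ for every $v\in\R$ --- the square root reflecting the flatness of $\cos$ at $\pm1$, i.e.\ at $\theta\in\{0,\pi\}$, which is exactly where adjacent bands are allowed to touch --- and summing over $m$ gives the claim.

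To finish, I would combine these into $\int_0^{2\pi}\norm{Q_\theta-\tfrac1t\int_0^t e^{is\J_\theta}A_\theta e^{-is\J_\theta}\,\dd s}^2\,\tfrac{\dd\theta}{2\pi}\le C(R)q^4\,t^{-2}\epsilon^{-2}+C(R)^q q^4\sqrt\epsilon$, choose $\epsilon=t^{-4/5}$ to balance the two terms (each becomes $\asymp C(R)^q q^4 t^{-2/5}$), and take square roots; this proves \eqref{eq:ak:A:conv} for $t\ge1$ with a suitable $C_1=C_1(R)>1$ once the powers of $q$ are absorbed, and for $0<t<1$ the crude bound $\norm{\,\cdot\,}\le 2Rq^2$ (valid since $|c_{jk}|\le1$) suffices after enlarging $C_1$. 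The main obstacle is the measure estimate on $\{|\lambda_j-\lambda_k|<\epsilon\}$: the naive pointwise bound by the reciprocal gap $1/|\lambda_j-\lambda_k|$ breaks down at closed gaps, so one has to argue globally through the discriminant in a way that both (i) produces a genuine power of $\epsilon$ and (ii) keeps the constant dependent only on $R$ and $q$, with no dependence on the --- possibly arbitrarily small --- lengths of the open spectral gaps.
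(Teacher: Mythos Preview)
Your proof is correct and its architecture matches the paper's: decompose fiberwise, bound off-diagonal matrix elements by $\min(1,\,2/(t|\lambda_j-\lambda_k|))$, split $\T$ into good/bad sets at a threshold $\epsilon$, bound the bad set's measure by $C(R)^q\sqrt\epsilon$, and optimize with $\epsilon=t^{-4/5}$.

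Where you genuinely diverge from the paper is in the proof of the key measure bound $\Leb\{|\lambda_j-\lambda_k|<\epsilon\}\le C(R)^q\sqrt\epsilon$. The paper routes this through the density of states: it invokes the explicit product formula for $\dd k/\dd E$ from \cite[Corollary~5.4.20]{simszego} together with an exponential \emph{lower} bound on band lengths (their Lemma~\ref{l:eigderivest}, borrowed from Last~\cite{la92}) to get $k(I)\le C_2^q|I|^{1/2}$, and then converts via the rotation-number identity $\tfrac{1}{\pi q}|\dd\theta/\dd E|=\dd k/\dd E$. Your argument instead uses Rolle's theorem on the discriminant to trap $2\cos\theta$ near one of the $\le q-1$ critical values $\Delta(\xi_m)$, an exponential \emph{upper} bound on $\|\Delta'\|_{L^\infty[-R,R]}$ (which follows from the crude transfer-matrix norm bound), and the elementary H\"older-$\tfrac12$ estimate $\Leb\{|2\cos\theta-v|<\delta\}\le C\sqrt\delta$. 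Your route is more self-contained---it avoids both the DOS product formula and the band-length lower bound---at the cost of a few extra powers of $q$ that are harmlessly absorbed into $C_1^q$. The paper's route, by contrast, gives slightly sharper pointwise control on the good set (via the Hilbert--Schmidt norm it gets a clean $4t^{-2/5}$ there without any $q$-prefactor), and its Lemma~\ref{l:eigderivest} is reused elsewhere in the paper (in the kernel argument for $Q$), so it is not developed solely for this estimate.
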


As a consequence, this immediately gives us quantitative bounds on the rate at which $t^{-1} X_\J(t)$ converges strongly to $Q_\J$.

\begin{coro} \label{coro:ak:conv}
For every $q$-periodic Jacobi matrix $\J \in \mathcal J(R)$ and every $\varphi \in D(X)$, one has
\begin{equation} \label{eq:ak:X:conv}
\left\|  Q_\J \varphi  - \frac{1}{t} X_\J(t) \varphi \right\|
\leq
 t^{-1} \|X \varphi \|  +  C_1^q \|\varphi\|_1 t^{-1/5},
\end{equation}
where $C_1 = C_1(R)$ denotes the constant from Theorem~\ref{t:ak:conv} and $\| \varphi \|_1$ denotes the $\ell^1$ norm of $\varphi$.
\end{coro}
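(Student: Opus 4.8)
The plan is to peel off the contribution of the bare position operator by a Duhamel identity and then reduce what remains to Theorem~\ref{t:ak:conv} through the Floquet transform $\mathcal F_q$, using unitarity of $\mathcal F_q$ and the direct-integral decompositions of $\J$, $A$, and $Q_\J$ already set up in Section~\ref{sec:conrate}. We may assume $\varphi \in \ell^1(\Z)$ throughout, since otherwise the asserted bound is vacuous.

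First I would record the Duhamel formula for the Heisenberg evolution of the position operator. The commutator $A = i[\J,X]$ extends to a bounded operator with $(A\varphi)_n = -i a_{n-1}\varphi_{n-1} + i a_n \varphi_{n+1}$ (so $\|A\| \le 2R$ for $\J \in \mathcal J(R)$), and $e^{-is\J}$ preserves $D(X)$ with $s \mapsto X_\J(s)\varphi$ strongly differentiable on $D(X)$; this is exactly the kind of estimate collected in Appendix~\ref{sec:ests}. Consequently $X_\J(t)\varphi = X\varphi + \int_0^t e^{is\J} A e^{-is\J}\varphi \,\dd s$ for $\varphi \in D(X)$, and dividing by $t$ and applying the triangle inequality gives
\[
\left\| Q_\J \varphi - \tfrac1t X_\J(t)\varphi \right\|
\le \tfrac1t \|X\varphi\| + \left\| Q_\J \varphi - \tfrac1t \int_0^t e^{is\J} A e^{-is\J}\varphi\,\dd s \right\|,
\]
so it remains to bound the last term by $C_1^q \|\varphi\|_1 t^{-1/5}$.

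Next I would pass to the direct-integral picture. Since $\mathcal F_q$ is unitary and conjugates $\J$, $A$, and $Q_\J$ to $\int_\T^\oplus \J_\theta\,\frac{\dd\theta}{2\pi}$, $\int_\T^\oplus A_\theta\,\frac{\dd\theta}{2\pi}$, and $\int_\T^\oplus Q_\theta\,\frac{\dd\theta}{2\pi}$ respectively, setting $\vec f = \mathcal F_q \varphi$ and using Plancherel we get
\[
\left\| Q_\J \varphi - \tfrac1t \int_0^t e^{is\J} A e^{-is\J}\varphi\,\dd s \right\|^2
= \int_\T \left\| \left( Q_\theta - \tfrac1t \int_0^t e^{is\J_\theta} A_\theta e^{-is\J_\theta}\,\dd s \right) \vec f(\theta) \right\|_{\C^q}^2 \frac{\dd\theta}{2\pi}.
\]
Bounding the integrand by the operator norm of the bracketed matrix times $\|\vec f(\theta)\|_{\C^q}^2$, and invoking the elementary pointwise inequality $\|\vec f(\theta)\|_{\C^q} \le \|\varphi\|_1$ (each component of $\vec f(\theta)$ equals $\sum_{\ell\in\Z}\varphi_{j+\ell q} e^{-i\ell\theta}$, and the $\ell^2$-norm over the $q$ components is dominated by the corresponding $\ell^1$-norm, hence by $\|\varphi\|_1$ after the triangle inequality in $\ell$), one factors $\|\varphi\|_1^2$ out of the $\theta$-integral and applies Theorem~\ref{t:ak:conv}:
\[
\left\| Q_\J \varphi - \tfrac1t \int_0^t e^{is\J} A e^{-is\J}\varphi\,\dd s \right\|^2
\le \|\varphi\|_1^2 \int_\T \left\| Q_\theta - \tfrac1t \int_0^t e^{is\J_\theta} A_\theta e^{-is\J_\theta}\,\dd s \right\|^2 \frac{\dd\theta}{2\pi}
\le \|\varphi\|_1^2\, C_1^{2q}\, t^{-2/5}.
\]
Taking square roots and inserting this into the previous display yields~\eqref{eq:ak:X:conv}.

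The only genuinely delicate step is the Duhamel identity for $\varphi$ merely in $D(X)$: one must verify that $e^{-is\J}$ maps $D(X)$ into itself and that $s \mapsto X_\J(s)\varphi$ is strongly differentiable with derivative $e^{is\J} A e^{-is\J}\varphi$, since $X$ is unbounded. This is precisely what the auxiliary estimates of Appendix~\ref{sec:ests} provide; once it is in hand, everything else is bookkeeping, as the Floquet conjugation of $\J$, $A$, and $Q_\J$ is already established in Section~\ref{sec:conrate} and the pointwise bound $\|\vec f(\theta)\|_{\C^q} \le \|\varphi\|_1$ is a one-line computation.
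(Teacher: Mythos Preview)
Your proof is correct and follows essentially the same approach as the paper: split off $t^{-1}\|X\varphi\|$ via the Duhamel identity $X_\J(t) = X + \int_0^t e^{is\J}Ae^{-is\J}\,\dd s$, then pass to the direct integral via $\mathcal F_q$, use the pointwise bound $\|\mathcal F_q\varphi(\theta)\|_{\C^q}\le\|\varphi\|_1$, and apply Theorem~\ref{t:ak:conv}. Two small remarks: the paper attributes the Duhamel identity to \cite{DLY} rather than to Appendix~\ref{sec:ests}, and your assumption ``$\varphi\in\ell^1$'' is automatic since $D(X)\subseteq\ell^1(\Z)$ by Cauchy--Schwarz.
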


\begin{proof}
We know that $D(X(t)) = D(X)$ for all $t$ and
\begin{equation} \label{eq:pos:integralrep}
X(t)
=
 X + \int_0^t e^{is\J} A e^{-is\J} \, \dd s
\end{equation}
as operators on $D(X)$ \cite{DLY}. Notice that $D(X) \subseteq \ell^1(\Z)$ and that
\[
\|\mathcal F_q \varphi (\theta) \|_{\C^q}
\leq
\| \varphi \|_1
\text{ for every } \theta \in \T.
\]
Thus, comparing \eqref{eq:ak:A:conv}, \eqref{eq:ak:X:conv}, and \eqref{eq:pos:integralrep}, we see that the statement of the corollary follows immediately from Theorem~\ref{t:ak:conv}.
\qed \end{proof}

In view of \eqref{eq:DLYcalc}, if $\theta \in \T$ is such that $\lambda_j(\theta) - \lambda_k(\theta)$ is not too small, then we can perform the $\dd s$ integral in the third line and obtain good control over the rate of convergence. Thus, the main obstacle in the proof of Theorem~\ref{t:ak:conv} is to bound the size of the set of ``bad'' $\theta$'s at which $\lambda_j$ and $\lambda_k$ may be close. However, since $\theta$ can be viewed as a rotation number, one is essentially asking for estimates on the density of states measure of small neighborhoods of band edges of $\J$. 

\begin{defi}
Since it is important in what follows, let us briefly recall the definition of the density of states measure $k = k_\J$ of a periodic Jacobi matrix $\J$. For each $n \geq 2$, let us consider the restriction of $\J$ to $\ell^2([1,n]\cap \Z)$ with Dirichlet boundary conditions, that is,
\[
\J^{(n)}_{\mathrm D}
=
\begin{bmatrix}
b_1 & a_1 &&&  \\
a_1 & b_2 & a_2 && \\
& \ddots & \ddots & \ddots & \\
&& a_{n-2} & b_{n-1} & a_{n-1} \\
&&& a_{n-1} & b_n
\end{bmatrix}
\in \R^{n \times n}.
\]
It is easy to see that $\J^{(n)}_{\mathrm D}$ has simple spectrum. Concretely, if $\J_{\mathrm D}^{(n)}$ has a degenerate eigenvalue, then it admits a nontrivial eigenvector $v$ with $v_1 = 0$; however, $v_1=0$ together with the Dirichlet boundary condition forces $v$ to vanish identically, contrary to the assumption that $v$ was nontrivial. Define a point measure by
\[
k_n
=
\frac{1}{n} \sum_{j=1}^n \delta_{E_{j,n}},
\]
where $\set{E_{j,n}}_{j=1}^n$ denotes the set of eigenvalues of $\J^{(n)}_{\mathrm D}$. Then, the sequence $\{k_n\}_{n \ge 2}$ has a weak$^*$ limit as $n\to \infty$; this limit is known as the \emph{density of states measure} of $\J$, and is denoted by $k$. For further information, see \cite[Chapter~5]{simszego}.
\end{defi}

It is known that the density of states measure of a periodic Jacobi matrix has square-root divergence at the band edges, and thus, the measure of the set of $\theta \in \T$ at which one may have $|\lambda_j(\theta) - \lambda_k(\theta)|\le \delta$ scales like $\delta^{1/2}$ as $\delta \downarrow 0$. However, this estimate is far too crude for our current purposes, since we need to control the scaling behavior of the D.O.S.\ across a family of Jacobi matrices with growing period, and hence, we need some measure of control on the implicit scaling constant; the following lemma supplies the necessary refinement.

\begin{lemma} \label{l:dos:bound}
Let $R > 0$ be given. There exists a constant $C_2 = C_2(R) > 1$ such that if $\J \in \mathcal J(R)$ is $q$-periodic, then
\[
k(I)
\leq
C_2^q |I|^{1/2}
\]
for all intervals $I \subseteq \R$.
\end{lemma}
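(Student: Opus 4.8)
The plan is to connect the density of states measure $k$ to the discriminant of the periodic Jacobi matrix and exploit the fact that the discriminant is a polynomial of controlled degree with controlled coefficients. Recall from Floquet theory (see \cite[Chapter~5]{simszego}) that if $\Delta(E) = \Delta_\J(E)$ denotes the discriminant of the $q$-periodic operator $\J$, then $k$ is the pullback under $\Delta$ of the normalized arclength measure on $[-2,2]$; more precisely, one has the formula
\[
k([E_-,E])
=
\frac{1}{q\pi} \left| \arccos\!\left( \tfrac{\Delta(E)}{2} \right) \right|
\]
appropriately interpreted band by band, and in any case $k(I) \le \tfrac{1}{q\pi} \Leb_{[-2,2]}(\Delta(I))^{1/2} \cdot (\text{const})$ type bounds follow from the square-root singularity of $\arccos$ near $\pm 2$. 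The key point is that $\Delta$ is a monic-up-to-scaling polynomial of degree $q$, namely $\Delta(E) = (a_1 \cdots a_q)^{-1} E^q + \cdots$, and on the spectrum $\sigma(\J) \subseteq [-R,R]$ one controls $\Delta$ and its derivative purely in terms of $R$ and $q$.

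First I would reduce to estimating $k(I)$ for small intervals $I$ contained in a single band of $\sigma(\J)$, since for large $|I|$ the bound $k(I) \le 1 \le C_2^q |I|^{1/2}$ is automatic once $C_2$ is chosen with $C_2^q \ge (2R)^{1/2}$-type normalization, and $k$ is supported on $\sigma(\J) \subseteq [-R,R]$ which has length $2R$. On a single band, $\Delta$ is a diffeomorphism onto $[-2,2]$ (or a sub-arc), and $k$ restricted to that band is the pushforward of normalized arclength on the circle; writing $\Delta(E) = 2\cos\phi$, the band-edge behavior is governed by $|\Delta'(E)|$ vanishing like a square root of the distance to the edge. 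So the estimate $k(I) \le C |I|^{1/2} \sup |\Delta'|^{1/2} / (q \cdot c)$ reduces to a uniform upper bound on $|\Delta'(E)|$ for $E \in [-R,R]$ and a uniform lower bound on $|a_1 \cdots a_q| \ge R^{-q}$ that prevents the polynomial from being too degenerate. Concretely, I would use: (i) $\Delta$ has degree $q$; (ii) the leading coefficient has modulus $(a_1\cdots a_q)^{-1} \in [R^{-q}, R^q]$; (iii) all roots of $\Delta^2 - 4$ lie in $[-R,R]$ since that interval contains the spectrum; from these, Markov-type inequalities for polynomials on an interval (e.g.\ $\sup_{[-R,R]} |\Delta'| \le (q^2/R)\sup_{[-R,R]}|\Delta|$ by the Markov brothers' inequality) together with a crude bound $\sup_{[-R,R]}|\Delta|$ in terms of the coefficients yield $\sup|\Delta'| \le C(R)^q$. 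Feeding this into the arclength computation gives $k(I) \le C(R)^q |I|^{1/2}$, absorbing the harmless $1/(q\pi)$ factor.

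The main obstacle I anticipate is making the square-root divergence estimate quantitative \emph{with the right dependence on $q$}: the naive statement "$k(I) \lesssim |I|^{1/2}$" hides a constant involving $\min_E |\Delta'(E)|^{1/2}$ near band edges (equivalently, the reciprocal of the smallest gap), which is exactly what one must avoid. The resolution is that one does \emph{not} need a lower bound on $|\Delta'|$: the pushforward-of-arclength description shows directly that $k(I)$ is at most the arclength of $\Delta(I)$ inside $[-2,2]$ divided by $2\pi q$, and that arclength (for the $\arccos$ of an interval near $\pm 2$) is bounded by a universal constant times $(\text{length of }\Delta(I))^{1/2}$, with $\text{length of }\Delta(I) \le |I| \sup|\Delta'| \le |I| C(R)^q$. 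Thus only the \emph{upper} bound on $|\Delta'|$ enters, which is the Markov-inequality step above and is clean. A secondary technical point is handling intervals $I$ straddling several bands or a band edge; this is dealt with by splitting $I$ into at most $q$ sub-intervals, one per band, applying the per-band estimate, and using subadditivity together with $\sum_i |I_i|^{1/2} \le q^{1/2} |I|^{1/2}$ by Cauchy--Schwarz — the extra $q^{1/2}$ is again absorbed into $C_2^q$.
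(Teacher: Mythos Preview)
Your proposal is correct and takes a genuinely different route from the paper's proof. The paper uses the explicit product formula for the density $\frac{\dd k}{\dd E}$ in terms of the band edges $\alpha_j,\beta_j$ and the critical points $x_j$ of $\Delta$, simplifies it on the $r$th band to $\frac{1}{\pi}\big[(E-\alpha_r)(\beta_r-E)\big]^{-1/2}$, and then integrates using the exponential \emph{lower} bound on band lengths $\beta_r-\alpha_r\ge 2C_3^{-q}$ furnished by Lemma~\ref{l:eigderivest}. Your argument instead changes variables to $u=\Delta(E)$ so that the square-root singularity is absorbed into the universal $\tfrac12$-H\"older continuity of $\arccos$ on $[-1,1]$, leaving only an \emph{upper} bound on $\sup_{[-R,R]}|\Delta'|$ to be established; you then get that bound from Markov's inequality together with the elementary estimate $\sup_{[-R,R]}|\Delta|\le R^q(2R)^q$ coming from the leading coefficient $(a_1\cdots a_q)^{-1}\le R^q$ and the fact that all roots of $\Delta$ lie in $\sigma(\J)\subseteq[-R,R]$. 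Both arguments are short; yours is more self-contained in that it does not invoke Lemma~\ref{l:eigderivest} (indeed, via the identity $\dot\lambda_j(\theta)=-2\sin\theta/\Delta'(\lambda_j(\theta))$, your Markov bound on $|\Delta'|$ is essentially an independent proof of that lemma), while the paper's route has the virtue of reusing a lemma already needed for the kernel argument in Section~\ref{sec:proof}. Your handling of intervals straddling several bands via Cauchy--Schwarz and absorption of $q^{1/2}$ into $C_2^q$ is fine.
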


To prove this lemma, we will need to estimate band lengths of $\sigma(\J)$, which one may do by estimating $\dot \lambda_j(\theta)$ for $1 \leq j \leq q$. Since such estimates will also be needed in the proof of Theorem~\ref{pt:ballistic}, we state the bounds explicitly.

\begin{lemma} \label{l:eigderivest}
For every $R > 0$ there is a constant $C_3 = C_3(R) > 1$ with the following property. For any $q$-periodic $\J \in \mathcal J(R)$, one has
\[
\left| \dot\lambda_j(\theta) \right|
\geq
C_3^{-q} |\sin(\theta)|
\]
for every $1 \leq j \leq q$ and every $\theta \in \T$.
\end{lemma}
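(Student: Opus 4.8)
The plan is to reduce the derivative $\dot\lambda_j(\theta)$ to the discriminant of the periodic operator. Write the difference equation $a_{n-1}u_{n-1}+b_nu_n+a_nu_{n+1}=Eu_n$ in transfer-matrix form, $\binom{u_{n+1}}{u_n}=A_n(E)\binom{u_n}{u_{n-1}}$ with $A_n(E)=\left(\begin{smallmatrix}(E-b_n)/a_n & -a_{n-1}/a_n \\ 1 & 0\end{smallmatrix}\right)$, let $T_q(E)\eqdef A_q(E)\cdots A_1(E)$ be the monodromy matrix, and let $\Delta(E)\eqdef\tr T_q(E)$ be the discriminant. Because $a_0=a_q$ by $q$-periodicity, $\det T_q(E)=\prod_{n=1}^q(a_{n-1}/a_n)=1$, so the eigenvalues of $T_q(E)$ form a reciprocal pair $\mu,\mu^{-1}$ with $\mu+\mu^{-1}=\Delta(E)$. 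Standard Floquet theory (see \cite[Ch.~7]{teschljacobi} or \cite[Ch.~5]{simszego}) then identifies, for each $\theta\in\T$, the eigenvalues of $\J_\theta$ with the roots of $\Delta(E)=2\cos\theta$: a twisted-periodic eigenvector of $\J_\theta$, extended by the rule $u_{n+q}=e^{i\theta}u_n$, is precisely a Bloch solution of $\J u=Eu$, which forces $e^{i\theta}$ to be an eigenvalue of $T_q(E)$, and conversely any such Bloch solution produces an eigenvector of $\J_\theta$. For $q\le 2$, where $\J_\theta$ takes the degenerate form recorded in the text, this identity still holds verbatim, again because $\det T_q=1$ follows from $a_0=a_q$.

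Next I would differentiate. The values $\theta\in\{0,\pi\}$ are trivial, since there the right-hand side of the claimed inequality vanishes, so fix $\theta$ in one of the open arcs $(0,\pi)$, $(\pi,2\pi)$. On such an arc $\J_\theta$ depends real-analytically on $\theta$ and has simple spectrum, so each $\lambda_j(\theta)$ is a real-analytic function of $\theta$ and, for fixed $\theta$, a \emph{simple} root of $E\mapsto\Delta(E)-2\cos\theta$ (the $q$ distinct eigenvalues of $\J_\theta$ exhaust the $q$ roots of a degree-$q$ polynomial), so in particular $\Delta'(\lambda_j(\theta))\neq0$. Differentiating the identity $\Delta(\lambda_j(\theta))=2\cos\theta$ in $\theta$ gives
\[
\dot\lambda_j(\theta)=\frac{-2\sin\theta}{\Delta'(\lambda_j(\theta))},
\qquad\text{hence}\qquad
\bigl|\dot\lambda_j(\theta)\bigr|=\frac{2|\sin\theta|}{\bigl|\Delta'(\lambda_j(\theta))\bigr|}.
\]
Thus it suffices to prove an upper bound of the form $|\Delta'(E)|\le C_3^q$ valid for every $E\in\sigma(\J)$; and since $\sigma(\J)\subseteq[-\|\J\|,\|\J\|]\subseteq[-R,R]$, it is enough to bound $|\Delta'(E)|$ for all $|E|\le R$.

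For that bound I would differentiate the product, $T_q'(E)=\sum_{n=1}^q A_q(E)\cdots A_{n+1}(E)\,A_n'(E)\,A_{n-1}(E)\cdots A_1(E)$ with $A_n'(E)=\operatorname{diag}(1/a_n,0)$. Since $\J\in\mathcal J(R)$, for $|E|\le R$ one has $|b_n|\le R$ and $R^{-1}\le a_n,a_{n-1}\le R$, so there is a constant $K=K(R)$ with $\|A_n(E)\|\le K$ and $\|A_n'(E)\|\le R$ for every $n$; hence $\|T_q'(E)\|\le qRK^{q-1}$ and $|\Delta'(E)|\le 2\|T_q'(E)\|\le 2qRK^{q-1}$. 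Using $q\le 2^q$, this is at most $C_3^q$ for a suitable $C_3=C_3(R)>1$, and combining with the displayed identity yields $|\dot\lambda_j(\theta)|\ge 2C_3^{-q}|\sin\theta|\ge C_3^{-q}|\sin\theta|$, as required. I expect the only delicate point to be the Floquet identification of $\sigma(\J_\theta)$ with the zero set of $\Delta(\cdot)-2\cos\theta$, together with simplicity of these roots off $\{0,\pi\}$ — this is precisely what legitimizes the implicit differentiation; everything afterwards is a routine norm estimate for a product of $q$ uniformly bounded $2\times2$ matrices, and the two cases $q\in\{1,2\}$ can if desired be checked directly from the explicit formulas for the eigenvalues $\lambda_j(\theta)$.
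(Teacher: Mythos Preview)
Your proof is correct and is precisely the argument the paper is invoking: the paper's proof consists only of a reference to \cite[Equation~(3.10)]{la92} with the remark that the adaptation to Jacobi matrices is straightforward, and what you have written is exactly that adaptation --- identify $\sigma(\J_\theta)$ with the zero set of $\Delta(\cdot)-2\cos\theta$, differentiate implicitly to obtain $\dot\lambda_j(\theta)=-2\sin\theta/\Delta'(\lambda_j(\theta))$, and bound $|\Delta'(E)|$ on $[-R,R]$ by an exponential in $q$ via the product rule applied to the transfer matrices.
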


\begin{proof}
This is essentially \cite[Equation~(3.10)]{la92}. Last works with discrete Schr\"odinger operators, for which the off-diagonal elements obey $a_n \equiv 1$. However, it is a straightforward exercise to adapt his proof to the Jacobi setting and deduce the statement of the lemma.
\qed \end{proof}

\begin{proof}[Proof of Lemma~\ref{l:dos:bound}]
The main idea is to use explicit expressions for the (derivative of) the integrated density of states together with exponential lower bounds on band lengths to control the scaling constant. More specifically, $k$ is absolutely continuous with respect to Lebesgue measure, and \cite[Corollary~5.4.20]{simszego} identifies its Radon-Nikodym derivative as
\begin{equation} \label{eq:DOS:explicit}
\frac{\dd k}{\dd E}(E)
=
\frac{1}{\pi}
\prod_{j=1}^{q-1} |E-x_j| \left[ \prod_{j=1}^q \frac{1}{|E - \alpha_j| |E - \beta_j|} \right]^{1/2},
\quad
E \in \sigma(\J).
\end{equation}
In~\eqref{eq:DOS:explicit}, $\alpha_j$ and $\beta_j$ are defined by
\[
\sigma(\J)
=
\bigcup_{j=1}^q [\alpha_j,\beta_j],
\]
and $x_j$ denotes the $j$th zero of the derivative of the discriminant of $\J$ (which is known to satisfy $\beta_j \leq  x_j \leq \alpha_{j+1}$ for each $1 \leq j \leq q-1$). Notice that the expression in~\eqref{eq:DOS:explicit} differs slightly from the expression in \cite{simszego}, in that we phrase the products in terms of the period, $q$, rather than $\ell$, the number of open gaps in the spectrum. Of course, if the $j$th gap collapses, one has $\beta_j = x_j = \alpha_{j+1}$, so one obtains the expression from \cite{simszego} by canceling any term of the form
\[
\frac{|E - x_j|}{\sqrt{|E - \alpha_{j+1} ||E - \beta_j|}}
\]
whenever $j$ corresponds to a closed gap. Using~\eqref{eq:DOS:explicit}, it is easy to see that whenever $E \in B_r^\circ \eqdef (\alpha_r,\beta_r)$ for some $1 \leq r \leq q$, we have
\begin{align*}
\frac{\dd k}{\dd E}(E)
& =
\frac{1}{\pi}
\prod_{j=1}^{q-1} |E - x_j| 
\left[ \prod_{j=1}^q \frac{1}{|E - \alpha_j| |E - \beta_j|} \right]^{1/2} \\
&
\leq
\frac{1}{\pi} \cdot  \frac{1}{\sqrt{|E - \alpha_r| |E - \beta_r|}}.
\end{align*} 
The statement of the lemma follows immediately, since
\begin{equation} \label{eq:jacobibandest}
\beta_r - \alpha_r
\geq
2 C_3^{-q}
\end{equation}
for every $1 \leq r \leq q$ by Lemma~\ref{l:eigderivest}.  
\qed \end{proof}

\begin{proof}[Proof of Theorem~\ref{t:ak:conv}]
Let $\J \in \mathcal J(R)$ be $q$-periodic. Since $\Leb\set{0,\pi} = 0$, it suffices to bound the matrix elements
\[
\left\langle
\vec v_j(\theta),
\left(Q_\theta - 
\frac{1}{t} \int_0^t e^{is\J_\theta} A_\theta e^{-is\J_\theta} \, \dd s \right)
\vec v_k(\theta) \right\rangle,
\quad
1 \leq j,k \leq q, \; \theta \in \T \setminus \set{0,\pi}.
\]
If $j = k$, then it is easy to check that
\begin{equation} \label{eq:ak:matelt:conv:eqindices}
\left\langle \vec v_j(\theta), \left(Q_\theta - \frac{1}{t} \int_0^t e^{is\J_\theta} A_\theta e^{-is\J_\theta} \, \dd s\right) \vec v_j(\theta) \right\rangle
=
0
\end{equation}
for all $t > 0$. For $j \neq k$ and all $\theta \in \T \setminus \set{0,\pi}$, \eqref{eq:DLYcalc} yields
\begin{equation} \label{eq:ak:matelt:conv:diffindices}
\left| \left\langle \vec  v_j(\theta), \left( Q_\theta - \frac{1}{t} \int_0^t e^{is\J_\theta} A_\theta e^{-is\J_\theta} \, \dd s \right) \vec v_k(\theta) \right\rangle \right|
\leq
\frac{2R}{t|\lambda_j(\theta) - \lambda_k(\theta)|}.
\end{equation}
For each $t$, let us partition $\T$ into bad and good sets, according to whether $\lambda_j(\theta)$ can be close to $\lambda_k(\theta)$ for some $j \neq k$. More specifically, we put
\[
B_t
\eqdef
\set{\theta \in \T : \exists \, 1 \leq j < k\le q \text{ such that } |\lambda_j(\theta) - \lambda_k(\theta)| \leq \varepsilon_t },
\quad
\varepsilon_t
\eqdef
R q t^{-4/5}.
\]
For $\theta \in G_t \eqdef \T \setminus B_t$,~\eqref{eq:ak:matelt:conv:eqindices} and \eqref{eq:ak:matelt:conv:diffindices} imply
\begin{equation} \label{eq:gtconv}
\begin{split}
\norm{ Q_\theta - \frac{1}{t} \int_0^t e^{is\J_\theta} A_\theta e^{-is\J_\theta} \, \dd s }^2
& \leq
\norm{ Q_\theta - \frac{1}{t} \int_0^t e^{is\J_\theta} A_\theta e^{-is\J_\theta} \, \dd s }_{\mathrm{HS}}^2 \\
& \leq
\frac{4R^2 q^2}{t^2 \varepsilon_t^2} \\
& =
4t^{-2/5},
\end{split}
\end{equation}
where $\| M\|_{\mathrm{HS}} = \big(\tr(M^* M)\big)^{1/2}$ denotes the Hilbert--Schmidt norm of the matrix $M$. The remaining contribution is bounded above by
\begin{equation} \label{eq:btconv}
\int_{B_t} \left\| Q_\theta -  \frac{1}{t} \int_0^t e^{is\J_\theta} A_\theta e^{-is\J_\theta} \, \dd s \right\|^2 \, \frac{\dd \theta}{2\pi}
\leq
16 R^2 \cdot \Leb(B_t),
\end{equation}
so it remains to bound the Lebesgue measure of $B_t$. Since $\theta$ may be viewed as a rotation number, this is tantamount to bounding the density of states measure of the $\varepsilon_t$-neighborhood of the edges of the bands of $\sigma(\J)$. More explicitly, \cite[Theorem~5.4.5]{simszego} shows that
\[
\frac{1}{\pi q} \left|\frac{\dd \theta}{\dd E} \right|
=
\frac{\dd k}{\dd E}
\]
on $\sigma(\J)$. Combining this with Lemma~\ref{l:dos:bound}, we see that
\begin{equation} \label{eq:btmeas}
\Leb(B_t)
\leq
D^q t^{-2/5},
\end{equation}
where $D$ is a constant that depends only on $R$. The statement of the theorem follows immediately from~\eqref{eq:gtconv}, \eqref{eq:btconv}, and \eqref{eq:btmeas}.
\qed \end{proof}

\section{Proof of Ballistic Transport} \label{sec:proof}

We now have all the ingredients necessary to prove Theorem~\ref{pt:ballistic}. The overall strategy is as follows:\ given $\J \in \EC(\eta)$ with $\eta$ sufficiently large, let $\set{\J_n}_{n=1}^\infty$ be a sequence of periodic operators that converges exponentially quickly to $\J$ in operator norm. For each $n$, there is a bounded self-adjoint operator
\[
Q_n
\eqdef
\slim_{t \to \infty} \frac{1}{t} X_{\J_n}(t).
\]
We begin by showing that $\{Q_n\}_{n=1}^\infty$ is a strongly Cauchy sequence of bounded operators, and hence has a limit $Q = \slim_n Q_n$, which serves as a natural candidate for $\slim_{t\to \infty} t^{-1} X_\J(t)$. The estimates from the previous section and Appendix~\ref{sec:ests} ensure that we may choose a sequence of times scales so that on the $n$th time scale, $X_\J$ is close to $X_{\J_n}$ and $X_{\J_n}$ is close to $Q_n$, thus enabling the interchange of $n$ and $t$ limits.

Dealing with the kernel of $Q$ is slightly more delicate. For $\varphi \in \ell^1(\Z) \setminus \set{0}$, we show that $Q_n \varphi$ converges to $Q \varphi$ faster than $\|Q_n \varphi\|$ can decay to zero, which forces $Q \varphi \neq 0$. The precise details follow.

\begin{proof}[Proof of Theorem~\ref{pt:ballistic}]
 Let $R$ be given, and let $\J \in \mathcal J(R) \cap \EC(\eta_0)$, where $\eta_0$ is sufficiently large. As the argument progresses, it will be easy to see that $\eta_0$ only depends on $R$ and that it will only increase finitely many times over the course of the argument.  A bit more concretely, fix a constant $\kappa > 10$, and enlarge $R$ enough so that $\J_k \in \mathcal J(R)$ for every $k$. It then suffices to take
\[
\eta_0
>
2\kappa \log(C_0),
\] 
where $C_0 = \max(6,R+1,C_1,C_3)$, $C_1=C_1(R)$ from Theorem~\ref{t:ak:conv}, and $C_3 = C_3(R)$ from Lemma~\ref{l:eigderivest}.
\newline

\noindent \textbf{Existence of {\boldmath$Q$}.} First, let us show that there exists a bounded operator $Q$ such that $Q_n \to Q$ in the strong operator topology. Since the family $\set{Q_n}_{n=1}^\infty$ is uniformly bounded (indeed, $\|Q_n\| \leq 2R$ for all $n$), it suffices to prove that 
\begin{equation}\label{eq:qn:summable}
\sum_{n = 1}^\infty \|Q_n \varphi - Q_{n+1} \varphi \|
<
\infty
\end{equation}
for every $\varphi \in \ell^1(\Z)$. To that end, define $t_n = C_0^{\kappa q_{n+1}}$ for each $n \in \Z_+$, and let $\varphi \in \ell^1(\Z)$ be given. Theorem~\ref{t:ak:conv} yields 
\begin{align*}
\left\| \left( Q_n - \frac{1}{t_n} \int_0^{t_n} e^{is\J_n} A_n e^{-is\J_n} \, \dd s \right) \varphi  \right\|
& \leq
C_0^{q_n} \|\varphi\|_1 t_n^{-1/5} \\
\left\| \left( Q_{n+1}  -  \frac{1}{t_n} \int_0^{t_n} e^{is\J_{n+1}} A_{n+1} e^{-is\J_{n+1}} \, \dd s \right)   \varphi  \right\|
& \leq
C_0^{q_{n+1}} \|\varphi\|_1  t_n^{-1/5},
\end{align*}
both of which are clearly summable by our choice of $t_n$ and $\kappa$. Next, by using Theorem~\ref{t:position:divergence} and $C_0 \geq R+1$, we obtain 
\[
\frac{1}{t_n}\norm{ \int_0^{t_n} \left( 
e^{is\J_n} A_n e^{-is\J_n} 
- e^{is\J_{n+1}} A_{n+1} e^{-is\J_{n+1}} \right) \, \dd s}
\leq
2C_0 t_n \|\J_n - \J_{n+1}\|.
\]
which is summable by our choice of $t_n$ and $\eta_0$. Combining these three summability statements, we obtain \eqref{eq:qn:summable} for all $\varphi \in \ell^1(\Z)$. Since $\|Q_n\| \leq 2R$ and $Q_n^* = Q_n$ for all $n \in \Z_+$, it follows that $Q_n$ converges strongly to a bounded self-adjoint operator $Q$ on $\ell^2(\Z)$.

\bigskip

\noindent \textbf{Convergence of {\boldmath $t^{-1}X_\J(t)$}.} Fix $\varphi \in D(X)$ and let $t_n = C_0^{\kappa q_{n+1}}$ as before. For all $t > 0$ and all $n \in \Z_+$, we have the estimate
\[
\left\| \frac{1}{t} X_\J(t) \varphi - Q\varphi \right\|
\leq 
\frac{1}{t} \left\|  X_\J(t) \varphi - X_{\J_n}(t)\varphi  \right\| 
+\left\| \frac{1}{t}  X_{\J_n}(t)\varphi - Q_n \varphi \right\| 
+\left\| Q_n \varphi - Q\varphi \right\|.
\]
Let us estimate each of these terms individually for $t_{n-1} \leq t < t_n$. The first term on the right-hand side may be controlled by Theorem~\ref{t:position:divergence}:
\begin{equation} \label{eq:heis:est1}
\frac{1}{t} \| X_\J(t) \varphi - X_{\J_n}(t) \varphi \|
\leq
2C_0^{\kappa q_{n+1} + 1} \| \J - \J_n \| \|\varphi\|,
\quad
t_{n-1} \leq t < t_n.
\end{equation}
The right-hand side of~\eqref{eq:heis:est1} goes to zero as $n \to \infty$ by our choice of $\eta_0$. Next, by Corollary~\ref{coro:ak:conv}, we get
\begin{equation} \label{eq:heis:est2}
\left\| \frac{1}{t} X_{\J_n}(t) \varphi - Q_n \varphi \right\|
\leq
\frac{1}{t_{n-1}} \|X \varphi \|_2 + C_0^{q_n} \|\varphi\|_1 t_{n-1}^{-1/5},
\quad
\quad
t_{n-1} \leq t < t_n.
\end{equation}
Of course, $n \to \infty$ as $t \to \infty$, and the right-hand sides of~\eqref{eq:heis:est1} and \eqref{eq:heis:est2} go to zero as $n \to \infty$, so, since $\|Q_n \varphi - Q \varphi\| \to 0$ as $n \to \infty$, we have
\[
Q\varphi
=
\lim_{t \to \infty} \frac{1}{t} X_\J(t) \varphi,
\]
as desired.

\bigskip

\noindent \textbf{Kernel of {\boldmath $Q$}.} We conclude by showing that $\ker(Q) \cap \ell^1(\Z) = \set{0}$. To that end, let $\varphi \in \ell^1(\Z) \setminus \set{0}$ be given, and assume without loss $\|\varphi\|_2 = 1$.

First, we will prove lower bounds on $\|Q_n \varphi \|$. Working with the Fourier transform, let us write
\[
[\mathcal F_{q_n} \varphi](\theta)
=
\begin{bmatrix}
\varphi_1\\
\varphi_2\\
\vdots\\
\varphi_{q_n}
\end{bmatrix}
+
e^{i\theta} 
\begin{bmatrix}
\varphi_{-q_n+1} \\
\varphi_{-q_n+2} \\
\vdots\\
\varphi_{0}
\end{bmatrix}
+
\vec r_n(\theta),
\quad
\theta \in \T.
\]
Since $\varphi \in \ell^1(\Z)$, the  remainder $\vec r_n$ satisfies
\[
\lim_{n \to \infty} \sup_{\theta \in \T} \norm{\vec r_n(\theta)}_{\C^{q_n}} 
=
0.
\] 
Consequently, there exists $N_0 = N_0(\varphi)$ such that for every $n \geq N_0$, there is a subset $S_n=S_n(\varphi) \subseteq \T$ with the following properties:
\begin{equation} \label{eq:s:conditions}
\Leb(S_n) \geq \frac{1}{4},
\quad
S_n \subseteq \left[\frac{\pi}{4},\frac{3\pi}{4} \right] \cup \left[\frac{5\pi}{4},\frac{7\pi}{4} \right],
\end{equation}
and
\begin{equation} \label{eq:fphi:lb}
\|[\mathcal F_{q_n} \varphi ](\theta)\|^2_{\C^{q_n}}
\geq
\frac{1}{2},
\quad
\text{ for all } \theta \in S_n.
\end{equation}
Denote by $\J_{n,\theta}$ the matrix of~\eqref{eq:jtheta:def} with $\J$ replaced by $\J_n$, let $\set{\lambda_{n,j}(\theta) : 1 \leq j \leq q_n}$ denote its eigenvalues, and define $Q_{n,\theta}$ by \eqref{eq:qdef} with $\lambda_j$ replaced by $\lambda_{n,j}$. Using \eqref{eq:s:conditions} and \eqref{eq:fphi:lb}, we have
\begin{equation} \label{eq:qphi:lbs}
\begin{split}
\|Q_n \varphi \|^2
& =
\int_\T \|Q_{n,\theta} \cdot [\mathcal F_{q_n} \varphi](\theta) \|^2 \, \frac{\dd \theta}{2\pi} \\
& \ge 
\int_{S_n} \|Q_{n,\theta} \cdot [\mathcal F_{q_n} \varphi](\theta) \|^2 \, \frac{\dd \theta}{2\pi} \\
& \ge 
\frac{1}{8} \left[ \inf_{1 \leq j \leq q_n, \, \theta \in S_n} \left|  \dot\lambda_{n,j}(\theta) \right|^2 \right] \\
& \geq
\frac{1}{16} C_3^{-2q_n},
\end{split}
\end{equation}
where we have used Lemma~\ref{l:eigderivest} in the final line. In light of \eqref{eq:qphi:lbs} and the initial estimates from the proof, it is impossible to have $Q \varphi = 0$. Concretely, since $\kappa > 10$, our initial estimates yield the following: there exists $N_1 = N_1(\varphi)$ such that
\[
\|Q_n \varphi - Q_{n+1} \varphi \|
\leq
C_0^{-q_{n+1}}
\]
whenever $n \ge N_1$. Thus, (very) crudely estimating the tail of the series, we have
\begin{equation} \label{eq:qphi:ubs}
\| Q_n \varphi - Q \varphi\|
\leq
\sum_{\ell=n}^\infty \|Q_\ell \varphi - Q_{\ell+1} \varphi \|
\leq
\sum_{\ell=n}^\infty C_0^{- q_{\ell+1}}
\leq
\sum_{j=q_{n+1}}^\infty C_0^{-j}
=
\frac{C_0^{-q_{n+1}}}{1-C_0^{-1}}
\end{equation}
for all $n \geq N_1$. Since $C_0 \geq \max(6,C_3)$ and $q_{n+1} \geq q_n + 1$ for all $n$, we may combine \eqref{eq:qphi:lbs} and \eqref{eq:qphi:ubs} to get
\[
\|Q \varphi\|
\geq
\|Q_N \varphi \|-\|(Q - Q_N) \varphi \|
\geq
\frac{1}{4} C_3^{-q_N} - \frac{C_0^{- q_{N+1}}}{1-C_0^{-1}}
\geq
\frac{1}{20} C_3^{-q_N}
>
0
\]
with $N = \max(N_0,N_1)$. Consequently, $Q \varphi \neq 0$, as desired.
\qed \end{proof}

\begin{appendix}

\section{General Propagation Estimates}\label{sec:ests}

In the appendix, we prove a simple upper bound on the rate at which the Heisenberg evolution of the position operator diverges  with respect to different local Hamiltonians given by Jacobi matrices. This material is classical and well-known; it is presented here for the convenience of the reader and to make the paper more self-contained. 

\begin{theorem} \label{t:position:divergence}
If $\J,\J' \in \mathcal J(R)$ for some $R>0$, then
\begin{equation} \label{eq:pos:div}
\norm{X_\J(t) - X_{\J'}(t)}
\leq
2\big( R t^2 + |t| \big) \|\J - \J'\|
\quad
\text{for all } t \in \R.
\end{equation}
\end{theorem}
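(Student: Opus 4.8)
The plan is to start from the integral representation of the Heisenberg-evolved position operator. Writing $A_\J \eqdef i[\J,X]$, which is bounded because commuting with $X$ annihilates the diagonal part of $\J$ and leaves only the off-diagonal terms, one has $\|A_\J\| \le 2\|\J\| \le 2R$ whenever $\J \in \mathcal J(R)$ (since $|a_n| = |\langle \delta_n, \J \delta_{n+1}\rangle| \le \|\J\|$). As operators on $D(X)$, the representation
\[
X_\J(t) = X + \int_0^t e^{is\J} A_\J e^{-is\J}\, \dd s
\]
holds (this is \eqref{eq:pos:integralrep}, from \cite{DLY}), and likewise for $\J'$. Subtracting, the unbounded term $X$ cancels and we are left with
\[
X_\J(t) - X_{\J'}(t) = \int_0^t \left( e^{is\J} A_\J e^{-is\J} - e^{is\J'} A_{\J'} e^{-is\J'} \right)\, \dd s,
\]
which exhibits the left-hand side as a genuinely bounded operator whose norm we estimate by bounding the integrand pointwise in $s$.

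For the integrand I would insert and subtract $e^{is\J} A_{\J'} e^{-is\J}$ and apply the triangle inequality, splitting into two contributions. The first is $\|e^{is\J}(A_\J - A_{\J'})e^{-is\J}\| = \|A_\J - A_{\J'}\| = \|i[\J - \J', X]\| \le 2\|\J - \J'\|$, again because the commutator with $X$ only sees the off-diagonal part of $\J - \J'$. The second is $\|e^{is\J} A_{\J'} e^{-is\J} - e^{is\J'} A_{\J'} e^{-is\J'}\|$, which I would control by inserting $e^{is\J} A_{\J'} e^{-is\J'}$ and using unitarity of $e^{is\J}$ and $e^{-is\J'}$ to bound it by $2\|A_{\J'}\|\,\|e^{is\J} - e^{is\J'}\| \le 4R|s|\,\|\J - \J'\|$, where the final step uses $\|A_{\J'}\| \le 2R$ together with the Duhamel identity $e^{is\J} - e^{is\J'} = \int_0^s e^{i(s-r)\J}\, i(\J - \J')\, e^{ir\J'}\, \dd r$, which gives $\|e^{is\J} - e^{is\J'}\| \le |s|\,\|\J - \J'\|$.

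Combining the two pieces, the integrand is at most $2\|\J - \J'\|(1 + 2R|s|)$, and integrating over $s$ from $0$ to $|t|$ yields $\|X_\J(t) - X_{\J'}(t)\| \le 2\|\J - \J'\|\,(|t| + Rt^2)$, which is exactly the claimed bound. The argument is essentially routine; the only point requiring genuine care is the bookkeeping around the unbounded operator $X$ — one must invoke the integral representation (and the fact that $D(X_\J(t)) = D(X)$) to know that the difference $X_\J(t) - X_{\J'}(t)$ extends to a bounded operator before manipulating its norm — together with tracking the factors of $2$ so that the final constants match the statement precisely.
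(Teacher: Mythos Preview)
Your argument is correct and follows essentially the same route as the paper's proof: both reduce to bounding the integrand $e^{is\J}A_\J e^{-is\J} - e^{is\J'}A_{\J'}e^{-is\J'}$ by a three-term telescoping (changing the left exponential, the middle commutator, and the right exponential one at a time), invoking the Duhamel bound $\|e^{is\J}-e^{is\J'}\|\le |s|\,\|\J-\J'\|$ for the exponential pieces and $\|[\J-\J',X]\|\le 2\|\J-\J'\|$ for the commutator piece, then integrating in $s$. The only cosmetic differences are that the paper works with vectors $\varphi\in D(X)$ and differentiates $F_\varphi(t)$ rather than subtracting the integral representations directly, and it states the Duhamel estimate as a separate lemma; the resulting pointwise bound $(4R|s|+2)\|\J-\J'\|$ and the final constant are identical.
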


The following lemma will be helpful.

\begin{lemma} \label{l:matexpdiv}
Let $A$ and $B$ denote bounded self-adjoint operators on a separable Hilbert space $\Hi$. For all $t \in \R$, one has the estimate
\[
\norm{e^{itA} - e^{itB}}
\leq 
 |t| \|A-B\|.
\]
\end{lemma}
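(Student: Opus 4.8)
The plan is a standard Duhamel-type interpolation argument. It suffices to treat $t>0$, since $t=0$ is trivial and the case $t<0$ will follow from the $t>0$ case by replacing the pair $(A,B)$ with $(-A,-B)$. First I would fix $t>0$ and introduce the operator-valued interpolation
\[
G(s)
\eqdef
e^{isA} e^{i(t-s)B},
\quad
s \in [0,t],
\]
so that $G(0) = e^{itB}$ and $G(t) = e^{itA}$. Because $A$ and $B$ are bounded, the maps $s \mapsto e^{isA}$ and $s \mapsto e^{i(t-s)B}$ are differentiable in operator norm, with derivatives $iAe^{isA}$ and $-iBe^{i(t-s)B}$ respectively; this is immediate from the defining power series, which converges in norm uniformly on compact $s$-intervals and may be differentiated term by term. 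Applying the product rule and using that $A$ commutes with $e^{isA}$ and $B$ with $e^{i(t-s)B}$, one obtains
\[
G'(s)
=
e^{isA}\, i(A-B)\, e^{i(t-s)B}.
\]

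Next I would invoke the fundamental theorem of calculus for functions valued in the space of bounded operators on $\Hi$ to write
\[
e^{itA} - e^{itB}
=
G(t) - G(0)
=
\int_0^t G'(s) \, \dd s
=
\int_0^t e^{isA}\, i(A-B)\, e^{i(t-s)B} \, \dd s.
\]
Since $e^{isA}$ and $e^{i(t-s)B}$ are unitary, the integrand has norm at most $\norm{A-B}$ for every $s \in [0,t]$, whence
\[
\norm{e^{itA} - e^{itB}}
\leq
\int_0^t \norm{A-B} \, \dd s
=
t\, \norm{A-B}
=
|t|\, \norm{A-B}.
\]
Finally, for $t<0$ I would set $t = -u$ with $u>0$ and apply the bound just established to the pair $(-A,-B)$, giving $\norm{e^{itA} - e^{itB}} = \norm{e^{iu(-A)} - e^{iu(-B)}} \leq u\,\norm{(-A)-(-B)} = |t|\,\norm{A-B}$, which completes the argument.

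There is no genuinely hard step here; the statement is elementary, and the only points requiring (routine) care are the norm-differentiability of $s \mapsto e^{isA}$ for bounded $A$ and the validity of the fundamental theorem of calculus for Banach-space-valued functions, both entirely standard. One could instead start from the Duhamel integral equation $e^{itA} = I + \int_0^t iAe^{isA}\,\dd s$ and apply Gr\"onwall's inequality, but that route yields only the weaker bound of order $\norm{B}^{-1}\big(e^{\norm{B}|t|}-1\big)\norm{A-B}$, so the interpolation argument above is the efficient way to the sharp constant.
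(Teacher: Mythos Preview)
Your proof is correct and is essentially the same Duhamel argument as the paper's: the paper differentiates $G(t) = I - e^{-itA}e^{itB}$ with respect to $t$ and uses unitarity of $e^{itA}$ at the end, while you fix $t$ and interpolate via $G(s) = e^{isA}e^{i(t-s)B}$ for $s\in[0,t]$, obtaining $e^{itA}-e^{itB}$ directly as the integral; the derivative computations and the resulting bounds are identical.
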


\begin{proof}
Let us define $G(t) = I - e^{-itA} e^{itB}$ for $t \in \R$. One may readily verify that $G$ is a differentiable function from $\R$ into the space of bounded linear operators on $\Hi$, and that
\[
-i\frac{\dd G}{\dd t}(t)
=
e^{-itA}(A-B)e^{itB}.
\]
Consequently, $\|\dd G/\dd t\| \equiv \|A-B\|$. Since $G(0) = 0$, we know that
\[
G(t)
=
\int_0^t \frac{\dd G}{\dd s}(s)\, \dd s,
\]
and hence $\|G(t)\| \leq  |t|\|A-B\|$ for all $t$. Since $e^{itA}$ is unitary for every $t \in \R$, we have
\[
\|e^{itA} - e^{itB}\|
=
\| I - e^{-itA} e^{itB} \|
=
\| G(t)\|
\leq
|t|\|A-B\|,
\]
as desired.
\qed \end{proof}

\begin{proof}[Proof of Theorem~\ref{t:position:divergence}]
Let $\J,\J' \in \mathcal J(R)$ and $\varphi \in D(X)$ with $\| \varphi \| = 1$ be given; define $F_\varphi:\R \to \ell^2(\Z)$ by
\[
F_\varphi(t) = X_\J(t) \varphi - X_{\J'}(t) \varphi.
\] 
It is easy to see that $F_\varphi$ is a differentiable function with $F_\varphi(0) = 0$, and that one has
\[
-i \frac{\dd F_\varphi}{\dd t}(t)
=
e^{it\J} [\J,X] e^{-it\J} \varphi - e^{it\J'} [\J',X] e^{-it\J'} \varphi.
\]
Thus, we can estimate $\|\dd F_\varphi /\dd t\|$ from above as follows:
\begin{equation} \label{eq:Fphi:ub}
\begin{split}
\left\| \frac{\dd F_\varphi}{\dd t}(t)\right\|
\leq & \;
\| e^{it\J} [\J,X] e^{-it\J}\varphi - e^{it\J'} [\J,X] e^{-it\J}\varphi \| \\
& +
\| e^{it\J'} [\J,X] e^{-it\J}\varphi - e^{it\J'} [\J',X] e^{-it\J}\varphi \| \\
& +
\| e^{it\J'} [\J',X] e^{-it\J}\varphi - e^{it\J'} [\J',X] e^{-it\J'}\varphi \|.
\end{split}
\end{equation}
We can bound the first and third terms from above using Lemma~\ref{l:matexpdiv}:
\begin{align*}
\| e^{it\J} [\J,X] e^{-it\J}\varphi - e^{it\J'} [\J,X] e^{-it\J}\varphi \| 
& \leq 
|t| \|\J - \J'\| \|[\J,X]\|
\leq
2R|t| \|\J - \J'\|  \\
\| e^{it\J'} [\J',X] e^{-it\J}\varphi - e^{it\J'} [\J',X] e^{-it\J'}\varphi \|
& \leq 
|t| \|\J - \J'\| \|[\J',X]\|
\leq
2R|t| \|\J - \J'\| .
\end{align*} 
Next, we turn to the middle term on the right hand side of \eqref{eq:Fphi:ub}; by unitarity of $e^{it\J}$ and $e^{it\J'}$, it is equal to $\| [\J,X] - [\J',X] \|$, which may be bounded above via
\[
\| [\J,X] - [\J',X] \|
\leq
2 \|\J - \J'\|.
\]
Combining these three estimates and using the fundamental theorem of calculus, we have
\[
\| F_\varphi(t) \|
\leq
2(Rt^2 + |t|)\| \J - \J'\|
\quad
\text{for all }
t \in \R.
\] 
Since $D(X)$ is dense in $\ell^2(\Z)$, the statement of the theorem follows.
\qed \end{proof}

\section{Absolute Continuity for Jacobi Matrices in $\EC(\eta)$} \label{sec:acspec}

We will supply a fairly simple proof that elements of $\EC(\eta)$ have purely absolutely continuous spectrum for $\eta$ sufficiently large.

\begin{theorem} \label{t:EC:acspec}
For all $R > 0$, there exists $\eta_0 = \eta_0(R)$ such that if $\J \in \EC(\eta_0)$ and $\J \in \mathcal J(R)$, then the spectral type of $\J$ is purely absolutely continuous.
\end{theorem}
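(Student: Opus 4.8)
The plan is to prove absolute continuity by combining spectral homogeneity with the Gesztesy--Zinchenko--type characterization of reflectionlessness, or — somewhat more elementarily — by a direct approximation argument that transfers the absolutely continuous spectrum of the periodic approximants $\J_n$ through to $\J$ using exponentially good convergence. Concretely, I would first fix $R > 0$ and a sequence of $q_n$-periodic operators $\J_n \in \mathcal J(R)$ with $e^{\eta_0 q_{n+1}}\|\J - \J_n\| \to 0$, where $\eta_0 = \eta_0(R)$ is to be chosen large. Each $\J_n$ has purely absolutely continuous spectrum $\sigma(\J_n) = \bigcup_{j=1}^{q_n}[\alpha_j^{(n)},\beta_j^{(n)}]$, and one knows $\sigma(\J) = \bigcap_n \sigma(\J_n)$ up to the usual caveats, together with the key quantitative fact from Lemma~\ref{l:eigderivest} that each band of $\sigma(\J_n)$ has length at least $2C_3^{-q_n}$.

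The main step is to establish a Combes--Thomas / Green's-function estimate showing that the resolvent of $\J$ restricted near the spectrum has boundary values in $L^2$, equivalently that the Borel transform $F_\varphi(z) = \langle \varphi, (\J - z)^{-1}\varphi\rangle$ has a boundary value $\lim_{\varepsilon \downarrow 0} \Im F_\varphi(E + i\varepsilon)$ that is finite for Lebesgue-a.e.\ $E$ and whose singular part vanishes. The mechanism is that on the bands of $\J_n$ the density of states $k_{\J_n}$ is absolutely continuous with the explicit Radon--Nikodym derivative \eqref{eq:DOS:explicit}, and the band-length lower bound \eqref{eq:jacobibandest} keeps $dk_{\J_n}/dE$ from blowing up too fast except in $C_3^{-q_n}$-neighborhoods of band edges; since $\|\J - \J_n\|$ decays like $e^{-\eta_0 q_{n+1}} \ll C_3^{-q_n}$ when $\eta_0 > \log C_3$ (and with room to spare, matching the choice $\eta_0 > 2\kappa\log C_0$ in the proof of Theorem~\ref{pt:ballistic}), the spectrum of $\J$ is contained, on the $n$th scale, in the union of the $\J_n$-bands minus these edge neighborhoods, which gives a uniform modulus of continuity for the spectral measures. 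Passing $n \to \infty$, one obtains that the spectral measures of $\J$ are absolutely continuous, and since this holds for all $\varphi$ in a dense set (indeed all $\varphi \in \ell^2$) the spectral type is purely absolutely continuous.

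Alternatively — and this is likely the route the author takes, given the pointer to \cite{FL15} and \cite{egorova} — one first shows the spectrum $\Sigma = \sigma(\J)$ is homogeneous in the sense of Carleson: there is $\gamma > 0$ so that $|\Sigma \cap (E - \delta, E + \delta)| \ge \gamma \delta$ for all $E \in \Sigma$ and all small $\delta$, which follows by combining the band-length bound \eqref{eq:jacobibandest} with the exponential gap-size control across scales afforded by $\J \in \EC(\eta_0)$. One then observes that $\J$ is reflectionless on $\Sigma$ — this is inherited from the periodic approximants, each of which is reflectionless on its spectrum, via norm-resolvent convergence and the fact that reflectionlessness is a closed condition — and invokes the theorem (Sodin--Yuditskii, or Gesztesy--Yuditskii, or the Kotani-theoretic argument in \cite{egorova}) that a bounded Jacobi matrix reflectionless on a homogeneous set with $\sigma_{\mathrm{ess}} \subseteq \Sigma$ has purely absolutely continuous spectrum there.

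The hard part will be making the homogeneity constant $\gamma$ uniform: the naive estimate from a single periodic approximant degrades like $C_3^{-q_n}$, so one must genuinely exploit that at scale $q_{n+1}$ the perturbation $\|\J - \J_n\|$ is exponentially smaller than the smallest band of $\J_n$, which is what lets the band structure of $\J_n$ ``protect'' the homogeneity of $\Sigma$ at scale $C_3^{-q_n}$ while finer scales are handled by $\J_{n+1}$, $\J_{n+2}$, and so on; telescoping these scales to get a single $\gamma$ is the technical core, and it is exactly there that the hypothesis $\eta_0$ sufficiently large (rather than merely $\eta_0 > 0$) enters. Establishing reflectionlessness carefully — i.e., that the diagonal Green's function $G(E + i0, n, n)$ is purely imaginary for a.e.\ $E \in \Sigma$ — is then a comparatively routine limiting argument from the periodic case.
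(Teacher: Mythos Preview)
Your second approach is close to the paper's actual route, and you correctly identify the two main ingredients (homogeneity of $\Sigma$ and reflectionlessness on $\Sigma$) as well as the final appeal to a Poltoratski--Remling/Sodin--Yuditskii type theorem. The homogeneity argument you sketch is essentially what the paper does.

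The gap is in your reflectionlessness step. You write that reflectionlessness is ``inherited from the periodic approximants \ldots\ via norm-resolvent convergence and the fact that reflectionlessness is a closed condition,'' and call this ``a comparatively routine limiting argument.'' It is not: reflectionlessness is \emph{not} closed under operator-norm convergence. Norm convergence $\J_n \to \J$ gives convergence of $G_{\J_n}(z,n,n) \to G_\J(z,n,n)$ uniformly on compacta of $\{\Im z > 0\}$, but says nothing about convergence of the nontangential boundary values $G(E+i0,n,n)$, which is what reflectionlessness concerns. Indeed, the paper's own introduction lists examples of limit-periodic operators (hence norm limits of periodic, reflectionless operators) with pure point spectrum --- such operators are certainly not reflectionless on their spectra. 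So the naive limiting argument fails outright, and the exponential rate alone does not obviously rescue it without substantial additional work.

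The paper obtains reflectionlessness by a different and genuinely necessary mechanism: first it shows (via Last's theorem on periodic approximants, needing only $q_n\|\J - \J_n\| \to 0$) that the Lyapunov exponent vanishes Lebesgue-a.e.\ on $\Sigma$; Kotani theory then gives $\Sigma_{\mathrm{ac}} = \overline{\mathcal Z}^{\mathrm{ess}} = \Sigma$ (the second equality using homogeneity); and finally Remling's theorem on the a.c.\ spectrum of Jacobi matrices forces $\J$ to be reflectionless on $\Sigma$. Only then does Poltoratski--Remling apply. Your first approach (direct Green's-function/Combes--Thomas estimates) might in principle be made to work, but as written it is too schematic to assess, and the claim that ``the spectrum of $\J$ is contained, on the $n$th scale, in the union of the $\J_n$-bands minus these edge neighborhoods'' is not obviously correct --- $\sigma(\J)$ can certainly meet edge neighborhoods of $\sigma(\J_n)$.
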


Even though Theorem~\ref{t:EC:acspec} is a direct spectral statement, the most pleasant proof involves tools from inverse spectral theory. In particular, the first step is to show that the spectrum of such a $\J$ is homogeneous in the sense of Carleson \cite{carleson83}, which enables us to use some powerful tools from the inverse theory. After proving that the spectrum is homogeneous, we use a soft argument in conjunction with work of Last~\cite{Last93} to prove that the Lyapunov exponent vanishes Lebesgue a.e.\ on the spectrum. Then, homogeneity and vanishing exponents then can be used in combination with the works of Remling~\cite{remling2011} and Poltoratski--Remling~\cite{poltrem09} to deduce purely a.c.\ spectrum. The details follow presently.

\begin{remark}
Let us remark in passing that vanishing Lyapunov exponents are insufficient to prove purely a.c.\ spectrum for (at least) two reasons, one trivial, and one subtle. First, the spectrum could have zero measure and hence not support any absolutely continuous measures whatsoever; of course, homogeneity implies positive Lebesgue measure, so this is not an issue in our case. The second, more subtle issue is that vanishing exponents simply tell us that the a.c.\ spectrum is essentially supported everywhere in the spectrum \cite{simon83CMP}, but we cannot use this to exclude singular spectrum; for this, we need additional information, in the form of the Poltoratski--Remling Theorem.
\end{remark}

\subsection{Homogeneity of the Spectrum}
We say that a compact set $\Sigma \subseteq \R$ is \emph{homogeneous} if there exist $\delta_0, \tau > 0$ such that
\[
|\Sigma \cap B_\delta(E)|
\ge
\delta \tau
\]
for every $E \in \Sigma$ and every $0 < \delta \le \delta_0$. Here and in everything that follows, we use $|\cdot|$ to denote the Lebesgue measure on $\R$.

\begin{theorem}\label{t:homog}
If $\J \in \EC(\eta)$ for sufficiently large $\eta$, then $\sigma(\J)$ is homogeneous.
\end{theorem}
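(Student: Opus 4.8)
The plan is to realize $\sigma(\J)$ as an ``eventual intersection'' of the spectra of the periodic approximants, up to a controllably small exceptional set, and then to play this off against the uniform exponential lower bound on band lengths furnished by Lemma~\ref{l:eigderivest}. Fix $\J \in \EC(\eta_0) \cap \mathcal J(R)$ with $q_n$-periodic approximants $\J_n$ (so $q_n \mid q_{n+1}$, $q_n \neq q_{n+1}$, hence $q_{n+1} \ge 2^{\,n-1}q_1$), and write $\epsilon_n = \|\J - \J_n\|$, $\delta_n = \|\J_n - \J_{n+1}\|$; after enlarging $R$ we may assume $\J_n \in \mathcal J(R)$ for all $n$, and $\J \in \EC(\eta_0)$ gives $\epsilon_n,\,\delta_n = o(e^{-\eta_0 q_{n+1}})$ (using $q_{n+2}\ge q_{n+1}$ for $\delta_n$). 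First I would record the soft inputs: for self-adjoint operators $\sigma(A) \subseteq \sigma(B)^{(\|A-B\|)}$, so $d_{\mathrm H}(\sigma(\J),\sigma(\J_n)) \le \epsilon_n$; and, putting $L_n = \bigcup_{m \ge n}\big(\sigma(\J_m) \setminus \sigma(\J_{m+1})\big)$, a one-line induction shows
\[
\sigma(\J) \supseteq \sigma(\J_n) \setminus L_n \qquad \text{for every } n,
\]
since a point lying in every $\sigma(\J_m)$ with $m \ge n$ is within $\epsilon_m \to 0$ of the closed set $\sigma(\J)$.

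The technical core is a quantitative bound on the spectrum lost in one step:
\[
\big| \sigma(\J_n) \setminus \sigma(\J_{n+1}) \big| \le \rho_n, \qquad \rho_n \le C_4(R)^{q_{n+1}}\, \delta_n^{1/2}.
\]
To obtain it, I would view both $\J_n$ and $\J_{n+1}$ as $q_{n+1}$-periodic and compare their discriminants $\widetilde\Delta_n$ and $\Delta_{n+1}$. A product-rule estimate on the one-period transfer matrix, whose $q_{n+1}$ factors have norm $\le C(R)$ on a fixed neighborhood of $\sigma(\J_n) \cup \sigma(\J_{n+1})$, yields $\|\Delta_{n+1} - \widetilde\Delta_n\|_\infty \le \gamma_n$ with $\gamma_n \le q_{n+1}C(R)^{q_{n+1}}\delta_n$. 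Hence $\sigma(\J_n) \setminus \sigma(\J_{n+1}) \subseteq \{E : 2 - \gamma_n < |\widetilde\Delta_n(E)| \le 2\}$, and it remains to bound the measure of this near-extremal set by $\lesssim q_{n+1}\sqrt{\gamma_n}$. This is where the Floquet parametrization enters: on the $j$-th band, $\widetilde\Delta_n\big(\lambda_{n,j}^{[q_{n+1}]}(\theta)\big) = 2\cos\theta$, so $|\widetilde\Delta_n| > 2 - \gamma_n$ confines $\theta$ to a set of measure $O(\sqrt{\gamma_n})$ around $\pi\Z$, while $|\dot\lambda_{n,j}^{[q_{n+1}]}| \le R$ by the Hellmann--Feynman formula (only the corner entries of the Floquet matrix \eqref{eq:jtheta:def} depend on $\theta$, and they are off-diagonal entries of $\J_n$, hence bounded by $\|\J_n\| \le R$); summing over the $q_{n+1}$ bands gives the claim. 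Therefore $|L_n| \le \Sigma_n := \sum_{m \ge n}\rho_m$, and since $q_{n+1}$ grows at least geometrically the series converges super-geometrically, with $\Sigma_n \le 2\rho_n$ for large $n$; choosing $\eta_0 = \eta_0(R)$ larger than $2\log\!\big(4\,C_3(R)\,C_4(R)\big)$ makes $\Sigma_n \le \tfrac12 C_3^{-q_{n+1}}$ at all sufficiently small scales.

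Homogeneity is then a packing argument. Given $E_* \in \sigma(\J)$ and small $\delta$, let $n$ be the unique index with $2C_3^{-q_{n+1}} \le \delta < 2C_3^{-q_n}$ (so $n \to \infty$ as $\delta \to 0$), and pick $E_n \in \sigma(\J_n)$ with $|E_* - E_n| \le \epsilon_n \le C_3^{-q_{n+1}} \le \delta/2$. By \eqref{eq:jacobibandest} every band of $\sigma(\J_n)$ has length $\ge 2C_3^{-q_n} > \delta$, so the band containing $E_n$ contains an interval $J$ with $E_n$ as an endpoint and $|J| = \delta/2$; moreover $J \subseteq B_\delta(E_*)$ because $\epsilon_n \le \delta/2$. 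Since $J \subseteq \sigma(\J_n)$ and $\sigma(\J) \supseteq \sigma(\J_n) \setminus L_n$,
\[
\big|\sigma(\J) \cap B_\delta(E_*)\big| \ge |J| - |L_n| \ge \frac{\delta}{2} - \Sigma_n \ge \frac{\delta}{4},
\]
which is homogeneity with $\tau = \tfrac14$ and $\delta_0$ determined by the threshold ``$n$ large enough''.

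The main obstacle is the one-step loss bound $\big|\sigma(\J_n)\setminus\sigma(\J_{n+1})\big| \le C_4(R)^{q_{n+1}}\delta_n^{1/2}$, and in particular making it summable against the band scale $C_3^{-q_n}$: both $C_4(R)^{q_{n+1}}$ and $C_3^{-q_n}$ degenerate exponentially in the period, so it is essential that $\delta_n$ decay faster than every fixed exponential in $q_{n+1}$ --- precisely what the $\EC(\eta_0)$ hypothesis buys once $\eta_0$ is large. The square root in the loss bound, coming from the quadratic vanishing of $2 - |\widetilde\Delta_n|$ at band edges, is also the reason a crude Hausdorff-distance argument will not suffice and one must pass through the discriminant and the Floquet picture.
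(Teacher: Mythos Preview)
Your argument is correct, but it takes a noticeably different route from the paper's proof. The paper also fixes an interval of length $\sim\delta$ inside a band of $\Sigma_n$ and subtracts the successive losses $|I_0\cap(\Sigma_\ell\setminus\Sigma_{\ell+1})|$, but it bounds each such loss \emph{locally} and \emph{linearly} in $\|\J_\ell-\J_{\ell+1}\|$: from the Hausdorff bound $\Sigma_\ell\subseteq(\Sigma_{\ell+1})^{(\|\J_\ell-\J_{\ell+1}\|)}$, the set $\Sigma_\ell\setminus\Sigma_{\ell+1}$ lies in $\|\J_\ell-\J_{\ell+1}\|$-fringes of the band edges of $\Sigma_{\ell+1}$, and the band-length lower bound limits the number of such edges meeting $I_0$ to $O(\delta C^{q_{\ell+1}})$. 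This yields $|I_0\cap(\Sigma_\ell\setminus\Sigma_{\ell+1})|\lesssim \delta\, C^{q_{\ell+1}}\|\J_\ell-\J_{\ell+1}\|$, which is summable as soon as $\eta>\log C_3$. By contrast, you bound the \emph{global} loss $|\Sigma_n\setminus\Sigma_{n+1}|$ via a discriminant comparison, which produces a square-root dependence $C_4^{q_{n+1}}\delta_n^{1/2}$ and forces the larger threshold $\eta_0>2\log(C_3C_4)$; you also pass to $\sigma(\J)$ directly through the inclusion $\sigma(\J)\supseteq\bigcap_{m\ge n}\sigma(\J_m)$, whereas the paper proves uniform homogeneity for $\Sigma_N$ and sends $N\to\infty$ via upper-semicontinuity of Lebesgue measure under Hausdorff convergence. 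Both approaches work, but note that your closing remark is off the mark: a ``crude Hausdorff-distance argument'' is exactly what the paper uses, and it is in fact sharper than the discriminant route --- the key is to count bands locally rather than estimate the near-extremal set of the discriminant globally.
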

Although this statement allows for slightly weaker approximations that those in \cite{FL15}, it still follows readily from their arguments; we will supply a proof for the convenience of the reader.

First, we recall the definition of the Hausdorff metric. Given two nonempty compact subsets $F,K \subseteq \R$, put
\begin{equation} \label{eq:hdmetric:def}
d_{\Hd}(F,K)
:=
\inf\{ \varepsilon > 0 : F \subseteq B_\varepsilon(K)  \text{ and } K \subseteq B_\varepsilon(F) \},
\end{equation}
where $B_\varepsilon(X)$ denotes the open $\varepsilon$-neighborhood of the set $X \subseteq \R$. The function $d_{\Hd}$ defines a metric on the space of (nonempty) compact subsets of $\R$, known as the \emph{Hausdorff metric}. 

We will quote two helpful preparatory results: first, that Lebesgue measure is upper-semicontinuous with respect to the Hausdorff metric on compact subsets of $\R$, and second, that the spectrum of a self-adjoint operator is a 1-Lipschitz function of the operator.

\begin{prop} \label{p:lebmsr:semicont}
Suppose that $\{F_n\}_{n=1}^\infty$ and $\{K_n\}_{n=1}^\infty$ are sequences of compact subsets of $\R$ that are convergent in the Hausdorff metric, and denote $F = \lim F_n$ and $K = \lim K_n$. Then,
$$
|F \cap K|
\geq
\limsup_{n\to\infty}|F_n \cap K_n|.
$$
\end{prop}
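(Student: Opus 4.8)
The plan is to exploit Hausdorff convergence through a two-step argument: for each fixed $\varepsilon > 0$, eventual containment of $F_n$ and $K_n$ in the $\varepsilon$-neighborhoods of their limits bounds $|F_n \cap K_n|$ from above by $|B_\varepsilon(F) \cap B_\varepsilon(K)|$; then one lets $\varepsilon \downarrow 0$ and invokes downward continuity of Lebesgue measure. Concretely, fix $\varepsilon > 0$. By the definition of $d_{\Hd}$ in \eqref{eq:hdmetric:def} and the hypotheses $F_n \to F$, $K_n \to K$, there is $N = N(\varepsilon)$ so that $F_n \subseteq B_\varepsilon(F)$ and $K_n \subseteq B_\varepsilon(K)$ for all $n \ge N$; hence $F_n \cap K_n \subseteq B_\varepsilon(F) \cap B_\varepsilon(K)$ and therefore $|F_n \cap K_n| \le |B_\varepsilon(F) \cap B_\varepsilon(K)|$ for all such $n$. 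Taking $\limsup_{n \to \infty}$ gives $\limsup_n |F_n \cap K_n| \le |B_\varepsilon(F) \cap B_\varepsilon(K)|$.

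It then remains to show $|B_\varepsilon(F) \cap B_\varepsilon(K)| \to |F \cap K|$ as $\varepsilon \downarrow 0$. The key observation is that one cannot simply write $B_\varepsilon(F) \cap B_\varepsilon(K) = B_\varepsilon(F \cap K)$, since this identity is false in general; instead, set $G_\varepsilon \eqdef B_\varepsilon(F) \cap B_\varepsilon(K)$ and note that the $G_\varepsilon$ are nonincreasing as $\varepsilon \downarrow 0$ and that $\bigcap_{\varepsilon > 0} G_\varepsilon = \big( \bigcap_{\varepsilon > 0} B_\varepsilon(F) \big) \cap \big( \bigcap_{\varepsilon > 0} B_\varepsilon(K) \big) = F \cap K$, where the final equality uses that $F$ and $K$ are closed. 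Since $F$ is bounded, $G_1 \subseteq B_1(F)$ has finite Lebesgue measure, so continuity of measure from above applies and yields $|G_\varepsilon| \to |F \cap K|$.

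Combining the two conclusions, $\limsup_n |F_n \cap K_n| \le |G_\varepsilon|$ for every $\varepsilon > 0$; letting $\varepsilon \downarrow 0$ gives $\limsup_n |F_n \cap K_n| \le |F \cap K|$, which is the assertion. There is no substantial obstacle in this argument; the only points requiring a little care are the observation just made — that $B_\varepsilon(F) \cap B_\varepsilon(K)$ shrinks to $F \cap K$ but is itself not an $\varepsilon$-neighborhood of $F \cap K$ — and the (harmless) need to note finiteness of the measure of a bounded neighborhood before applying downward continuity of Lebesgue measure.
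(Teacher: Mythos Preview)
Your argument is correct. The paper does not actually supply a proof of this proposition; it simply cites \cite[Proposition~2.1]{F14}. Your direct argument---eventual containment $F_n\cap K_n \subseteq B_\varepsilon(F)\cap B_\varepsilon(K)$ from the definition of the Hausdorff metric, followed by downward continuity of Lebesgue measure on the decreasing family $G_\varepsilon = B_\varepsilon(F)\cap B_\varepsilon(K)$ using compactness of $F$ to ensure finite measure at some positive $\varepsilon$---is exactly the standard elementary proof and is complete as written. Your remark that $B_\varepsilon(F)\cap B_\varepsilon(K)$ need not equal $B_\varepsilon(F\cap K)$ is a useful cautionary note, though the argument never requires that identity.
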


\begin{proof}
This is precisely \cite[Proposition~2.1]{F14}.
\qed \end{proof}

\begin{prop} \label{p:specdist}
Suppose $A$ and $B$ are bounded self-adjoint operators on the Hilbert space $\mathscr H$. Then one has
\begin{equation} \label{eq:specdist}
d_{\mathrm H}(\sigma(A),\sigma(B))
\leq
\|A-B\|.
\end{equation}
\end{prop}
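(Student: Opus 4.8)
The plan is to prove this via the standard resolvent estimate for self-adjoint operators, namely that for $\lambda$ outside the spectrum one has $\|(B-\lambda)^{-1}\| = \mathrm{dist}(\lambda,\sigma(B))^{-1}$. Set $r = \|A-B\|$. Because $A$ and $B$ enter \eqref{eq:specdist} and the definition \eqref{eq:hdmetric:def} of $d_{\mathrm H}$ symmetrically, it suffices to prove the one-sided statement $\mathrm{dist}(\lambda,\sigma(B)) \le r$ for every $\lambda \in \sigma(A)$; applying this with the roles of $A$ and $B$ reversed then gives $\sigma(A) \subseteq B_\varepsilon(\sigma(B))$ and $\sigma(B) \subseteq B_\varepsilon(\sigma(A))$ for every $\varepsilon > r$, so that $d_{\mathrm H}(\sigma(A),\sigma(B)) \le r$.

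To establish the one-sided bound I would fix $\lambda \in \sigma(A)$ and argue by contradiction, assuming $\mathrm{dist}(\lambda,\sigma(B)) > r$. In particular $\lambda \notin \sigma(B)$, so $B-\lambda$ is boundedly invertible, and since $B$ is self-adjoint the spectral theorem yields $\|(B-\lambda)^{-1}\| = \mathrm{dist}(\lambda,\sigma(B))^{-1} < r^{-1}$. Writing
\[
A - \lambda = (B-\lambda) + (A-B) = (B-\lambda)\bigl(I + (B-\lambda)^{-1}(A-B)\bigr),
\]
one has $\|(B-\lambda)^{-1}(A-B)\| \le \|(B-\lambda)^{-1}\|\,\|A-B\| < r^{-1}\cdot r = 1$, so the factor $I + (B-\lambda)^{-1}(A-B)$ is invertible by the Neumann series. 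Hence $A-\lambda$ is invertible, contradicting $\lambda \in \sigma(A)$. Therefore $\mathrm{dist}(\lambda,\sigma(B)) \le r$, as needed.

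I do not anticipate any genuine obstacle: beyond elementary Banach-algebra manipulations, the only input is the identity $\|(B-\lambda)^{-1}\| = \mathrm{dist}(\lambda,\sigma(B))^{-1}$ for self-adjoint $B$ and $\lambda$ outside $\sigma(B)$, which follows at once from the spectral theorem (alternatively, from the fact that $\mathrm{dist}(\lambda,\sigma(B))^{-1}$ is the spectral radius of the normal operator $(B-\lambda)^{-1}$, and norm equals spectral radius for normal operators). If one wishes to be fully self-contained, one can replace that appeal with the direct estimate $\|(B-\lambda)u\| \ge \mathrm{dist}(\lambda,\sigma(B))\,\|u\|$, valid for self-adjoint $B$ by the spectral calculus, which is exactly the inequality needed to run the Neumann series argument.
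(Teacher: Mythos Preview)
Your argument is correct: the resolvent-norm identity $\|(B-\lambda)^{-1}\| = \mathrm{dist}(\lambda,\sigma(B))^{-1}$ for self-adjoint $B$, followed by a Neumann series perturbation, is the standard route to this inequality, and you have carried it out cleanly. The paper itself states Proposition~\ref{p:specdist} without proof, treating it as a well-known fact, so there is no authorial argument to compare against; your write-up would serve as a suitable self-contained justification.
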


\begin{proof}[Proof of Theorem~\ref{t:homog}] 
Let $\J \in \EC(\eta)$ be given. If $\J$ is periodic, the conclusion of the theorem is trivial, so assume that $\J$ is aperiodic. Let $\{\J_n\}_{n=1}^\infty$ be a sequence of periodic operators, such that $\J_n$ is $q_n$-periodic, $q_n|q_{n+1}$ and $q_n \neq q_{n+1}$ for each $n$, and
\begin{equation} \label{eq:ecdef2}
\lim_{n \to \infty} e^{\eta q_{n+1}}\| \J - \J_n\|
=
0.
\end{equation}
Notice that \eqref{eq:ecdef2} is preserved if one removes finitely many terms of the sequence $\{\J_n\}_{n=1}^\infty$ and consecutively renumbers the resulting sequence. For each $n \in \Z_+$, denote $\Sigma_n = \sigma(\J_n)$, $\Sigma = \sigma(\J)$, and choose $R$ large enough that
\begin{align*}
\J_n \in \mathcal J(R)
\text{ for every }
n \ge 1.
\end{align*}
By Lemma~\ref{l:eigderivest} there is a constant $C=C(R)$ such that every band of $\Sigma_n$ has length at least $C^{-q_n}$. Take $\eta > \log C$. Since $q_{n+1}$ grows at least exponentially quickly, we see that \eqref{eq:ecdef2} implies that
$$
\sum_{n=1}^\infty C^{q_{n+1}} \|\J_n - \J_{n+1}\|
<
\infty,
$$
so, by removing finitely many terms of the sequence $\{\J_n\}_{n=1}^\infty$, renumbering, and using $\eta > \log C$, we may assume that
\begin{equation} \label{eq:smalltail}
\sum_{n=1}^\infty
C^{q_{n+1}} \|\J_n - \J_{n+1}\| 
<
\frac{1}{10}.
\end{equation}
Put $\delta_0 = C^{-q_1}$. We will prove the following estimate:
\begin{equation} \label{eq:sbs:homog:est}
|B_\delta(x) \cap \Sigma_N|
\geq
\delta/2
\text{ for all } 
x \in \Sigma_N 
\text{ and every } 
0 < \delta \leq \delta_0
\end{equation}
for all $N \in \Z_+$. To that end, fix $N \in \Z_+$, $x \in \Sigma_N$, and $0 < \delta \leq \delta_0$. If $\delta \leq C^{-q_N}$, \eqref{eq:sbs:homog:est} is an obvious consequence of our choice of $C$, since $\delta$ is less than the length of the band of $\Sigma_N$ which contains $x$ in this case.  Otherwise, $\delta > C^{-q_N}$, and there is a unique integer $n$ with $1 \leq n \leq N-1$ such that
\begin{equation}\label{deltachoice}
C^{-q_{n+1}} 
< 
\delta 
\leq 
C^{-q_n}.
\end{equation}
The significance of $n$ arises precisely from the fact that it determines the periodic approximant that most closely corresponds to the length scale $\delta$. More precisely, by our choice of $C$, any band of $\Sigma_n$ has length at least $\delta$. Now, by Proposition~\ref{p:specdist}, there exists $x_0 \in \Sigma_n$ with
\begin{equation}
|x-x_0| 
\leq 
\|\J_n - \J_N\|
\leq
\sum_{\ell = n}^{N - 1}
\|\J_\ell - \J_{\ell+1}\|.
\end{equation}
 Using \eqref{eq:smalltail} and \eqref{deltachoice}, we deduce
\begin{align}
\nonumber
\lvert x-x_0 \rvert
& \leq 
\sum_{\ell = n}^{N - 1}
\|\J_\ell - \J_{\ell+1}\| \\
\nonumber
& < \delta C^{q_{n+1}}  \sum_{\ell= n}^{N - 1}  \|\J_\ell - \J_{\ell+1}\| \\
\nonumber
& \leq 
\delta  \sum_{\ell= n}^{N - 1} C^{q_{\ell+1}}  \|\J_\ell - \J_{\ell+1}\| \\
\label{eq:expsum2}
& < \frac{\delta}{10}.
\end{align}
Thus, there exists an interval $I_0$ with $x_0 \in I_0 \subseteq B_{\delta}(x) \cap \Sigma_n$ such that 
$$
|I_0| 
=
\delta - \frac{\delta}{10}
=
\frac{9 \delta}{10}.
$$
By subadditivity of Lebesgue measure, we have
$$
|B_\delta(x) \cap \Sigma_N|
\geq
|I_0 \cap \Sigma_n| - \sum_{\ell = n}^{N - 1}|I_0 \cap (\Sigma_\ell \setminus \Sigma_{\ell + 1})|
$$
Our choice of $C$ implies that the interval $I_0$ completely contains at most $\delta C^{q_{\ell + 1}}$ bands of $\Sigma_{\ell + 1}$ for each $\ell \geq n$. Consequently, Proposition~\ref{p:specdist} yields
\begin{align*}
|I_0 \cap (\Sigma_\ell \setminus \Sigma_{\ell+1})|
& \leq
2 (\delta C^{q_{\ell+1}} + 1)
\| \J_\ell - \J_{\ell + 1} \| \\
& \leq
4\delta C^{q_{\ell+1}} \| \J_\ell - \J_{\ell + 1} \|.
\end{align*}
Notice that the extra term in the parentheses on the first line is needed to account for possible boundary effects. Summing this over $\ell$ and estimating the result with \eqref{eq:smalltail}, we obtain
\[
\sum_{\ell = n}^{N - 1} |I_0 \cap (\Sigma_\ell \setminus \Sigma_{\ell + 1} )|
\leq
\sum_{\ell=n}^{N-1} 4 \delta C^{q_{\ell+1}} \| \J_\ell - \J_{\ell + 1} \|
< \frac{2 \delta}{5}.
\]
Putting all of this together, we have
\[
|B_\delta(x) \cap \Sigma_N|
\geq
|I_0 \cap \Sigma_n| - \sum_{\ell = n}^{N-1}|I_0 \cap(\Sigma_\ell \setminus \Sigma_{\ell + 1})|
>
\frac{9 \delta}{10} 
- \frac{2 \delta}{5}
=
\frac{\delta}{2}.
\]
This proves \eqref{eq:sbs:homog:est} for arbitrary $N \in \Z_+$. Consequently, we obtain
$$
|B_\delta(x) \cap \Sigma|
\geq
\delta/2
\text{ for all } x \in \Sigma, \text{ and } 0 < \delta \leq \delta_0,
$$
where we have used Proposition~\ref{p:lebmsr:semicont} with $F_n = B_\delta(x_n)$ and $K_n = \Sigma_n$, where $x_n \in \Sigma_n$ satisfies $x_n \to x$. Thus, $\Sigma$ is homogeneous, as promised.
\qed \end{proof}

\subsection{Vanishing Lyapunov Exponents}

Next, we will show that the Lyapunov exponent vanishes almost everywhere on the spectrum. In fact, this holds under vastly weaker assumptions on the rate of approximation. However, as mentioned before, absent additional information, the information provided by vanishing exponents is of limited utility in the determination of the spectral type, hence the need for a stronger assumption to deduce purely absolutely continuous spectrum.

Let us briefly recall the definition of the hull of a Jacobi matrix as well as the (averaged) Lyapunov exponent. Given $\J$, if $S$ denotes the shift, then $\J' = S \J S^{-1}$ is the Jacobi matrix with shifted parameters $a_n' = a_{n+1}$ and $b_n' = b_{n+1}$. The \emph{hull} of $\J$ is simply
\[
\Omega
=
\Omega_\J
\eqdef
\overline{\set{S^n \J S^{-n} : n \in \Z}},
\]
where the closure is taken in the operator norm topology. When $\J$ is limit-periodic, one may verify that $\Omega$ is a compact subset of the space of bounded linear operators on $\ell^2(\Z)$, and it is equipped with a unique shift-invariant probability measure. In fact, $\Omega$ has the structure of a totally disconnected compact monothetic topological group, and this invariant measure is precisely the normalized Haar measure on this group. For more insight and details on the hull of a limit-periodic operator, see \cite[Section~2]{avila}.\footnote{Avila works with discrete Schr\"odinger operators (for which $a_n \equiv 1$), but it only takes straightforward cosmetic adaptations to apply his discussions to general limit-periodic Jacobi matrices.}

The restriction of the shift to $\Omega_\J$ is a minimal transformation from $\Omega_\J$ to itself. Using this and a standard strong approximation argument, one can verify that $\sigma(\widetilde \J) = \sigma(\J)$ for every $\widetilde \J \in \Omega$. We denote this common spectrum by $\Sigma$.
\medskip

Given $n \ge 1$, the Jacobi matrix $\J = \J_{a,b}$, and $z \in \C$, the associated \emph{transfer matrix} is defined by
\[
A_z(n, \J)
=
\frac{1}{ a_n} \begin{bmatrix} z -  b_n & - 1 \\ a_n^2 & 0\end{bmatrix} \times
\cdots
\times
\frac{1}{a_1}\begin{bmatrix} z -  b_1 & - 1 \\ a_1^2 & 0\end{bmatrix}.
\]
Naturally, this defines an eigenfunction propagator in the sense that if $\J u = zu$, then
\[
\begin{bmatrix} u_{n+1} \\ a_n u_n \end{bmatrix}
=
A_z(n,\J) \begin{bmatrix} u_1 \\ a_0 u_0 \end{bmatrix},
\text{ for all } n \ge 1.
\]
Having defined the transfer matrices, the \emph{Lyapunov exponent} is given by
\[
L(z)
=
\lim_{n \to \infty} \frac{1}{n} \mathbb E(\log\| A_z(n,\cdot) \|), \quad
z \in \C,
\]
where $\mathbb E(\cdot)$ denotes integration against the unique invariant Borel probability measure on $\Omega$. In light of Kotani Theory~\cite{simon83CMP}, a key role is played by the set of energies at which the Lyapunov exponent vanishes:
\[
\ZL
\eqdef
\{z \in \C : L(z) = 0\}.
\]
By a straightforward argument using generalized eigenfunctions, one can check that $\ZL \subseteq \Sigma$.

\begin{theorem} \label{t:ETB:zeroLE}
Let $\J$ be limit-periodic. If $\J$ admits $q_n$-periodic approximants $\J_n$ such that $q_n \|\J - \J_n\| \to 0$ as $n \to \infty$, then $|\Sigma \setminus \ZL| = 0$, where $|\cdot|$ denotes the Lebesgue measure on $\R$.
\end{theorem}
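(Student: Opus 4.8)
The plan is to combine three ingredients: (i) an elementary ``fattening'' estimate showing that the hypothesis $q_n\|\J-\J_n\|\to 0$ forces $|\Sigma|\le\liminf_n|\Sigma_n|$, where $\Sigma_n\eqdef\sigma(\J_n)$; (ii) the classical fact from Floquet theory that a periodic Jacobi matrix has purely absolutely continuous spectrum, so that its Lyapunov exponent vanishes Lebesgue-a.e.\ on its spectrum; and (iii) the theorem of Last~\cite{Last93}, which is precisely engineered to transfer a ``large zero-Lyapunov set'' from periodic approximants to an ergodic limit under exactly the approximation rate $q_n\|\J-\J_n\|\to 0$, together with Kotani theory~\cite{simon83CMP} to identify $\ZL$ with an essential support of the absolutely continuous spectrum of $\J$ up to a Lebesgue-null set.

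I would begin with (i). By Proposition~\ref{p:specdist}, $\Sigma\subseteq B_{\varepsilon_n}(\Sigma_n)$ with $\varepsilon_n\eqdef\|\J-\J_n\|$. Since $\Sigma_n$ is a union of at most $q_n$ closed bands, fattening each band by $\varepsilon_n$ on either side gives
\[
|\Sigma|\le|\Sigma_n|+2q_n\varepsilon_n,
\]
and $q_n\varepsilon_n\to 0$ yields $|\Sigma|\le\liminf_n|\Sigma_n|$. (One could also invoke Proposition~\ref{p:lebmsr:semicont} to conclude $|\Sigma|=\lim_n|\Sigma_n|$, but only the one-sided bound is needed.) Ingredient (ii) is standard: each periodic $\J_n$ has $\sigma(\J_n)$ equal to a finite union of bands with purely a.c.\ spectral measures, and its Lyapunov exponent is strictly positive in gaps and vanishes on band interiors; hence the zero set $\ZL_n$ of the Lyapunov exponent of $\J_n$ satisfies $|\ZL_n\,\triangle\,\Sigma_n|=0$, i.e.\ $|\ZL_n|=|\Sigma_n|$.

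For (iii) I would invoke the argument of Last~\cite{Last93}: under the hypothesis $q_n\|\J-\J_n\|\to 0$, an essential support of the absolutely continuous spectrum of $\J$ --- equivalently, by Kotani theory~\cite{simon83CMP}, the set $\ZL$ modulo a Lebesgue-null set --- has Lebesgue measure at least $\limsup_n|\Sigma_n|=\limsup_n|\ZL_n|$. Since $\ZL\subseteq\Sigma$, chaining these facts gives
\[
|\Sigma|\ \le\ \liminf_n|\Sigma_n|\ \le\ \limsup_n|\Sigma_n|\ \le\ |\ZL|\ \le\ |\Sigma|,
\]
so equality holds throughout; in particular $|\ZL|=|\Sigma|$, and as $\ZL\subseteq\Sigma$ this is exactly $|\Sigma\setminus\ZL|=0$.

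The step I expect to be the main obstacle is the interface with Last's theorem, which is formulated for ergodic families: one must observe (as is done elsewhere in this paper for results quoted in the Schr\"odinger setting) that a limit-periodic $\J$ generates an ergodic family over its hull $\Omega_\J$ equipped with Haar measure, that each periodic approximant extends to this hull with the same norm control $\|S^m\J S^{-m}-S^m\J_n S^{-m}\|=\|\J-\J_n\|$ uniformly in $m$, and that the Lyapunov exponent and the essential supports in question are shift-invariant, so that Last's hypotheses apply verbatim. Once these routine translations are recorded, the remainder is the bookkeeping of inequalities above.
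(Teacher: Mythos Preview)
Your approach is sound in outline and very close in spirit to the paper's, but there is one imprecision worth flagging, and the paper's route is somewhat more direct.

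The paper also reduces to \cite[Theorem~1]{Last93}, but it does not sandwich measures. Instead it shows directly that
\[
\left|\Sigma \setminus \sigma\!\left(\widetilde{\J}^{(n)}\right)\right| \le 8q_n\|\J - \J_n\| \to 0
\]
for every $\widetilde\J$ in the hull, where $\widetilde\J^{(n)}$ is the $q_n$-periodic operator whose coefficients agree with those of $\widetilde\J$ on $[1,q_n]$. From this, $\Sigma \subseteq \limsup_n \sigma(\widetilde{\J}^{(n)})$ modulo a null set, and Last's theorem (which asserts $\limsup_n \sigma(\widetilde{\J}^{(n)}) \subseteq \ZL$ modulo a null set) finishes immediately. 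This is the same fattening estimate as your step~(i), but phrased as a set containment rather than a measure inequality, which lets one plug straight into the set-theoretic $\limsup$ appearing in Last's statement.

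Your chain $|\Sigma|\le\liminf|\Sigma_n|\le\limsup|\Sigma_n|\le|\ZL|$ works, but the third inequality is not literally what Last proves: his theorem concerns the \emph{periodic truncations} $\widetilde\J^{(n)}$ built from the coefficients of $\widetilde\J$, not the externally given approximants $\J_n$, and it controls the set $\limsup_n\sigma(\widetilde\J^{(n)})$, not the number $\limsup_n|\sigma(\widetilde\J^{(n)})|$. To close your loop you still need (a) the bridge $\|\widetilde\J - \widetilde\J^{(n)}\|\le 4\|\J - \J_n\|$ (which the paper records explicitly) so that $|\sigma(\widetilde\J^{(n)})| = |\Sigma_n| + o(1)$, and (b) reverse Fatou (valid since all spectra sit in a fixed bounded interval) to pass from $|\limsup_n\sigma(\widetilde\J^{(n)})|$ to $\limsup_n|\sigma(\widetilde\J^{(n)})|$. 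Your final paragraph anticipates an obstacle at the interface with Last, but misdiagnoses it: the issue is not extending $\J_n$ over the hull, it is that Last's approximants are truncations of $\widetilde\J$ itself, and one must check these are close to your $\J_n$. Once (a) and (b) are said, your argument is complete; the paper's version simply avoids both by working with the set inclusion directly.
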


\begin{proof}
We will apply \cite[Theorem~1]{Last93}.\footnote{Last works in the setting of discrete Schr\"odinger operators, but his theorem applies to Jacobi matrices; one can see this via cosmetic alterations to his proof.} In view of this result, it suffices to prove that
\[
\left|\Sigma \setminus \limsup_{n \to \infty} \sigma(\widetilde \J^{(n)}))\right|
=
0
\]
for every $\widetilde \J$ in the hull of $\J$, where $\widetilde \J^{(n)}$ denotes the $q_n$-periodic operator whose coefficients coincide with those of $\widetilde \J$ on $[1,q_n]$. Notice that $\J^{(n)}$ and $\J_n$ are not the same; however, one has
\[
\|\J - \J^{(n)}\|
\leq
4\|\J - \J_n\|
\]
by the triangle inequality. By shifting and taking limits,  this extends to every element of the hull. That is,
\[
\|\widetilde \J- \widetilde \J^{(n)}\|
\leq
4\|\J - \J_n\|
\]
for each $\widetilde \J \in \Omega_\J$. Consequently, since $\sigma(\widetilde \J^{(n)})$ has at most $q_n$ connected components, we may use Proposition~\ref{p:specdist} to deduce that
\begin{equation} \label{eq:perapprox:diff:meas:ub}
\left| \Sigma \setminus \sigma(\widetilde{\J}^{(n)}) \right|
\leq
8q_n \|\J - \J_n\|.
\end{equation}
But then,
\[
\Sigma \setminus\limsup_{n \to \infty} \sigma(\widetilde{\J}^{(n)})
=
\bigcup_{n=1}^\infty \bigcap_{k=n}^\infty \Sigma \setminus \sigma(\widetilde{\J}^{(n)})
\]
has Lebesgue measure zero, since the right-hand side of \eqref{eq:perapprox:diff:meas:ub} tends to zero as $n \to \infty$ by assumption. Thus, $|\Sigma \setminus \ZL| = 0$ by \cite[Theorem~1]{Last93}.
\qed \end{proof}

\subsection{Proof of the Main Result}

With all the preparatory work done, we are now in a position to prove that $\J \in \EC(\eta)$ has purely a.c.\ spectrum for $\eta$ large enough.

\begin{proof}[Proof of Theorem~\ref{t:EC:acspec}]
By the work of Last--Simon~\cite{LastSimon99} and minimality of the action of the shift on $\Omega$, every $\widetilde \J \in \Omega_\J$ has the same absolutely continuous spectrum; denote the common a.c.\ spectrum by $\Sigma_\ac$. Using Kotani Theory for Jacobi matrices (worked out by Simon in \cite{simon83CMP}), Theorems~\ref{t:homog} and \ref{t:ETB:zeroLE} imply that $\Sigma = \Sigma_\ac$. Concretely, the (Lebesgue) essential closure of a subset $S\subseteq \R$ is defined by
\[
\overline{S}^{\mathrm{ess}}
\eqdef
\{ x \in \R : |S \cap B_\delta(x)| > 0 \text{ for all } \delta > 0\}.
\]
Then, we have
\[
\Sigma_{\ac} 
=
\overline{\ZL}^{\mathrm{ess}}
=
\Sigma,
\]
where the first equality is due to \cite{simon83CMP} and the second inequality is a consequence of Theorems~\ref{t:homog} and \ref{t:ETB:zeroLE}. Then, \cite[Theorem~1.4]{remling2011}, implies that $\J$ is reflectionless on $\Sigma$. By Theorem~\ref{t:homog}, $\Sigma$ is homogeneous (hence weakly homogeneous), so all spectral measures are purely absolutely continuous by \cite[Corollary~2.3]{poltrem09}.
\qed \end{proof}

\end{appendix}

\section*{Acknowledements}
This work was supported in part by an AMS-Simons Travel Grant, 2016--2018. I am grateful to David Damanik, Mark Embree, Alex Elgart, and Milivoje Lukic for helpful discussions, and to G\"unter Stolz for helpful comments on the literature. I also thank the anonymous reviewers for helpful comments. Additionally, I am grateful to the Simons Center for Geometry and Physics for hospitality during the program ``Between Dynamics and Spectral Theory'', during which portions of this work were completed.

\end{document}